\newcommand{\lam}{\lambda}
\def\R{{\mathbb {R}}}
\def\lam{\lambda}
\def\ve{\varepsilon}
\newlength{\hchng}
\newlength{\vchng}
\def \R {\mathbb{R}}
\def \div {\mathrm{div}}
\def \dist {\mathrm{dist}}
\def \suchthat {\ \big | \ }
\def \ve {\varepsilon}
\newcommand{\defeq}{\mathrel{\mathop:}=}
\newtheorem{theorem}{Theorem}[section]
\newtheorem{lemma}[theorem]{Lemma}
\newtheorem{corollary}[theorem]{Corollary}
\theoremstyle{definition}
\newtheorem{definition}[theorem]{Definition}
\newtheorem{example}[theorem]{Example}
\theoremstyle{remark}
\newtheorem{remark}[theorem]{Remark}
\numberwithin{equation}{section}
\newcommand{\intav}[1]{\mathchoice {\mathop{\vrule width 6pt height 3 pt depth  -2.5pt
\kern -8pt \intop}\nolimits_{\kern -6pt#1}} {\mathop{\vrule width
5pt height 3  pt depth -2.6pt \kern -6pt \intop}\nolimits_{#1}}
{\mathop{\vrule width 5pt height 3 pt depth -2.6pt \kern -6pt
\intop}\nolimits_{#1}} {\mathop{\vrule width 5pt height 3 pt depth
-2.6pt \kern -6pt \intop}\nolimits_{#1}}}
\begin{document}
	
\title[Sharp regularity estimates for quasi-linear elliptic dead core problems]{Sharp regularity estimates for quasi-linear elliptic dead core problems and applications}

\author[J.V. da Silva and A.M. Salort]{Jo\~{a}o V\'{i}tor da Silva and Ariel M. Salort}

\address{Departamento de Matem\'atica, FCEyN - Universidad de Buenos Aires and
\hfill\break \indent IMAS - CONICET
\hfill\break \indent Ciudad Universitaria, Pabell\'on I (1428) Av. Cantilo s/n. \hfill\break \indent Buenos Aires, Argentina.}

\email[J. V. da Silva]{jdasilva@dm.uba.ar}

\email[A.M. Salort]{asalort@dm.uba.ar}
\urladdr{http://mate.dm.uba.ar/~asalort}

\begin{abstract}
In this manuscript we study geometric regularity estimates for quasi-linear elliptic equations of $p$-Laplace type ($1 < p< \infty$) with strong absorption condition:
\begin{equation*}
      -\div(\Phi(x, u,  \nabla u)) + \lambda_0(x) u_{+}^q(x) = 0 \quad \mbox{in} \quad \Omega \subset \R^N,
\end{equation*}
where $\Phi: \Omega \times \R_{+} \times \R^N \to \R^N$ is a vector field with an appropriate $p$-structure,  $\lambda_0$ is a non-negative and bounded function and $0\leq q<p-1$. Such a model is mathematically relevant because permits existence of solutions with dead core zones, i.e, \textit{a priori} unknown regions where non-negative solutions vanish identically. We establish sharp and improved $C^{\gamma}$ regularity estimates along free boundary points, namely $\mathfrak{F}_0(u, \Omega) = \partial \{u>0\} \cap \Omega$, where the regularity exponent is given explicitly by $\gamma = \frac{p}{p-1-q} \gg 1$. Some weak geometric and measure theoretical properties as non-degeneracy, uniform positive density and porosity of free boundary are proved. As an application, a Liouville-type result for entire solutions is established provided that their growth at infinity can be controlled in an appropriate manner. Finally, we obtain finiteness of $(N-1)$-Hausdorff measure of free boundary for a particular class of dead core problems. The approach employed in this article is novel even to dead core problems governed by the $p$-Laplace operator $-\Delta_p u + \lambda_0 u^q\chi_{\{u>0\}} = 0$ for any $\lambda_0>0$.
\newline
\newline
\noindent \textbf{Keywords:} Quasi-linear elliptic operators of $p$-Laplace type, improved regularity estimates, Free boundary problems of dead core type, Liouville type results, Hausdorff measure estimates.
\newline
\newline
\noindent \textbf{AMS Subject Classifications:} 35J60, 35B65.
\end{abstract}
	
\maketitle

\section{Introduction}

Quasi-linear elliptic equations whose nonlinear nature give rise to free boundaries come from as varied  phenomena as reaction-diffusion and absorption processes in pure and applied mathematics. Throughout the last decades we can find in the literature some remarkable examples which model several problems derived from theory of chemical-biological processes, combustion phenomenon and population dynamics, just to mention a few. Regarding this studies, an often more relevant problem to be dealt from a applied point of view is that which arises from diffusion processes with sign constrain, which are in general the only significant cases in physical situations (cf. \cite{Aris1}, \cite{Aris2}, \cite{BSS}, \cite{Diaz} and \cite{HM} for some motivational works). An interesting example is given by
\begin{equation}\label{DCP}
\left\{
\begin{array}{rclcl}
     -\Delta_p u(x) + \lambda_0(x) f(u)\chi_{\{u>0\}}& = & 0 & \mbox{in} & \Omega \\
     u(x) & = & g(x) & \mbox{on} & \partial \Omega,
\end{array}
\right.
\end{equation}
where $1<p< \infty$, $\Omega \subset \R^N$ is a regular and bounded domain, $\Delta_p u(x) = \div(|\nabla u|^{p-2}\nabla u)$ is the well-known $p$-Laplace operator and $\lambda_0 $ is a positive bounded function. In such a context $\lambda_0$ is known as \textit{Thiele Modulus} and it controls the ratio of reaction rate to diffusion-convection rate. Here $f$ is a continuous and increasing reaction term satisfying $f(0) \geq 0$ and $g$ is a continuous non-negative boundary value datum (in applied sciences, $f$ represents the ratio of reaction rate at concentration $u$ to reaction rate at concentration unity). When the nonlinearity $f \in C^1(\Omega)$ is locally $(p-1)-$Lipschitz near zero\footnote{We said that $f$ satisfies a Lipschitz condition of order $p-1$ at $0$ if there exist constants $\mathfrak{M}, \delta>0$ such that $f(u)\leq \mathfrak{M}u^{p-1}$ for $0<u<\delta$.}, it follows from the Maximum Principle that nonnegative solutions must be, in fact, strictly positive (cf. \cite{PS} and \cite{Vaz}). However, the function $f$ may fail to be differentiable or even not decaying fast enough at origin. For instance, if $f(t) \approx t^q$ with $0<q<p-1$, $f$ fails to be Lipschitz of order $p-1$ at the origin; in this case,  problem \eqref{DCP} has an absence of Strong Minimum Principle, i.e., non-negative solutions may vanish completely within an \textit{a priori} unknown region of positive measure $\Omega^{\prime} \subset \Omega$ known as \textit{dead core} set (cf. D\'{i}az's Monograph \cite[Chapter 1]{Diaz} for a complete survey about this subject of research). Such a peculiar characteristic of dead-core solutions allow us to treat \eqref{DCP} as a free boundary problem.

This class of dead core free boundary problems has received warm attention since the late 70's. A huge amount of investigations were carried out on  this topic of research, including existence of solutions and dead core sets, properties of localization, asymptotic behavior of solutions and  ``effectiveness factor'' among others, see, for instance the works due to Bandle \textit{et al} \cite{BSS, BV}, D\'{i}az \textit{et al} \cite{Diaz79}, \cite{Diaz}, \cite{DiazHern}, \cite{DiazHerr}, \cite{DiazVer} and Pucci-Serrin \cite{PS, PS06}. Despite of the large literature on divergence form dead core problems, quantitative properties for models with non-uniformly elliptic characters and a general structure are far less studied (cf. da Silva \textit{et al} \cite{OSS} and \cite{daSRS} as example of such considerations), and this has been our main impetus for the studies in the current article.

Therefore, in this article we study diffusion problems governed by quasi-linear elliptic equations of $p$-Laplace type for which a Minimum Principle is not available:
\begin{equation}\label{DCP1}
\left\{
\begin{array}{rclcc}
  -\div(\Phi(x, u, \nabla u)) + \lambda_0(x).f(u)\chi_{\{u>0\}}& = & 0 & \mbox{in} & \Omega \\
  u(x) & = & g(x) & \mbox{on} & \partial \Omega,
\end{array}
\right.
\end{equation}
where $\Phi: \Omega \times \R_{+} \times \R^N \to \R^N$ satisfies respectively a $p$-ellipticity and $p$-growth condition, which will be specified soon, $0\leq g \in C^0(\partial \Omega)$, $\lambda_0 \in C^0(\overline{\Omega})$ is a non-negative bounded function and $f$ is a continuous and increasing function with $f(0) = 0$. We are particularly interested in prototypes coming from combustion problems, chemical models (porous catalysis) or enzymatic processes where the existence of dead cores plays an important role in the model (cf. \cite{Aris1}, \cite{Aris2} and \cite{HM} for more explanations). For example, when $u$ represents the density (or temperature) of a chemical reagent (or gas), where such a solution is vanishing, it delineates a region where no reagent (temperature) is present. The standard model is given by
\begin{equation}\label{Maineq}
     -\div(\Phi(x, u, \nabla u)) + \lambda_0(x).u^{q}\chi_{\{u>0\}} = 0 \quad \mbox{in} \quad \Omega,
\end{equation}
where $0\leq q < p-1$ is called the \textit{order of reaction}, and \eqref{Maineq} is said to be an equation with \textit{strong absorption} condition.

 The study of \eqref{Maineq} is meaningful, not only for its applications, but also for its innate relation with several free boundary problems appearing in the literature  (cf. Alt-Phillips \cite{AP}, Andersson \cite{And}, da Silva \textit{et al} \cite{daSRS}, D\'{i}az \cite{Diaz}, Friedman-Phillips \cite{FriePhil}, Leit\~{a}o-Teixeira \cite{LT15} and Phillips \cite{Phil} for a variational treatment and da Silva \textit{et al} \cite{LRS} and Teixeira \cite{Tei-16} for a non-variational counterpart; we refer also to reader da Silva \textit{et al} \cite{OS} and \cite{OSS} for problems in the parabolic setting).

\subsection{Main hypothesis and overview of article}

Given a bounded domain $\Omega \subset \R^N$ we consider a function $\Phi: \Omega\times\R\times \R^N\to \R^N$ satisfying the following structural properties:
\vspace{0.2cm}

\begin{itemize}
	\item[(H1)][{\bf Continuity}]. $\Phi \in C^0(\Omega\times\R\times \R^N; \R^N)$.

	\item[(H2)][{\bf Monotonicity}]. For every $\xi_1,\xi_2\in \R^n$ and $(x, s) \in \Omega \times \R$ there holds
$$
   \langle \Phi(x,z,\xi_1)-\Phi(x,z,\xi_2), \xi_1-\xi_2 \rangle \geq 0.
$$

	\item[(H3)][{\bf $p$-Ellipticity}]. There exists positive constants $c_1, c_2$ such that for all $(x,z,\xi)\in\Omega\times \R\times \R^N$ there holds
	$$
		\langle \Phi(x,z,\xi),\xi\rangle \geq c_1|\xi|^p - c_2 |z|^p
	$$
	where $p\in (1,\infty)$.

	\item[(H4)][{\bf $p$-Growth}]. There exists positive constants $c_3, c_4$ such that for all $(x,z,\xi)\in\Omega\times \R\times \R^N$ and $1<p<\infty$ there holds
	$$
		|\Phi(x,z,\xi)| \leq c_3|\xi|^{p-1}+ c_4|z|^{p-1}
	$$

\end{itemize}

\begin{example}
A prototypical example for such a family of operators is given by
$$
	\Phi(x, z, \xi)= \mathfrak{A}(x)|\xi|^{p-2}\xi + \mathfrak{b}(x)|z|^{p-2}z
$$
where $\mathfrak{A}\in \R^{N\times N}$ is a symmetric uniformly elliptic and bounded matrix and $\mathfrak{b}$ is a non-negative bounded function. Another class of examples consist of
$$
  \Phi(x, z, \xi)= |\mathfrak{A}(x)\xi\cdot \xi|^{\frac{p-2}{2}}\mathfrak{A}(x)\xi + \mathfrak{b}(x)|z|^{p-2}z,
$$
where $\mathfrak{A}(x)$ and $\mathfrak{b}$ are as above. Notice that in the previous examples when $\mathfrak{A}(x) = I_{N}$ (the identity matrix) and $\mathfrak{b} \equiv 0$, we recover the $p$-Laplace operator.

We can refer (provided that $\Phi(\cdot, \cdot, \xi)$ is restrict to compact sets) the $p$-generalized mean curvature operator, namely
$$
  \Phi(x, z, \xi)= \frac{\mathfrak{A}(x)|\xi|^{p-2}}{(\sqrt{1+|\xi|^2})^m}\xi + \mathfrak{b}(x)|z|^{p-2}z,
$$
for any $m>0$ and $\mathfrak{A}(x)$ and $\mathfrak{b}$ as before. Observe that we recover the mean curvature operator when $p=2$ and $m=1$.
Finally, provided that $\Phi(\cdot, \cdot, \xi)$ is restrict to compact sets, we can cite  the following class of operators
$$
  \Phi(x, z, \xi)= \frac{\mathfrak{A}(x)|\xi|^{p-2}}{(1+|\xi|^p)^{1-\frac{s}{p}}}\xi + \mathfrak{b}(x)|z|^{p-2}z,
$$
where $1<s\leq p< \infty$. Such an operator reduces to $p$-Laplace when $s=p$, $\mathfrak{A}(x) = I_{N}$ and $\mathfrak{b} \equiv 0$.
\end{example}

Finally, by way of motivation, fix $0<q<p-1$ and $R>r_0>0$. Then, the radial profile $u: B_R(0) \to \R_{+}$ given by
$$
    u(x) = (|x|-r_0)_{+}^{\frac{p}{p-1-q}}
$$
is a weak solution to
$$
     -\Delta_p u + \mathfrak{c}(N, p, q).u^{q}(x)= 0 \quad \mbox{in} \quad B_R(0).
$$
A feature worth commenting about this example is that, in general, solutions to \eqref{Maineq} are know to be locally of class $C^{1, \gamma}$ for some $\gamma \in (0, 1)$ (cf. \cite{Choe1}, \cite{DB0}, \cite{LU} and \cite{Tolk}).  However, for such an example, fixed $p>1$, one observes that $u \in C_{\text{loc}}^{\lfloor \alpha \rfloor, \beta}(B_R(0))$, where
$$
  \alpha(p, q) \defeq \frac{p}{p-1-q} \quad \text{and} \quad \beta(p, q) \defeq \frac{p}{p-1-q} - \left\lfloor \frac{p}{p-1-q} \right\rfloor.
$$
Moreover, notice that $\alpha(p, q)> 2$ provided that $q > \max\left\{0,  \frac{p-2}{2}\right\}$, which means that this is a classical solution, even across the free boundary. The proper understanding of such a phenomenon should yield decisive geometric information about the solution and its free boundary, and this is the main goal of our investigation, which focus on a systematic and non-linear approach for such a class of problems. Therefore,  we will show that \textit{any} solution to \eqref{Maineq} behaves near its free boundary like the previous example.

Precisely, we will prove the following improved regularity estimate at free boundary points:

\begin{theorem}\label{MThm} Let $u$ be a bounded weak solution to \eqref{Maineq}. Then, given $\tau >0$ there exists a positive universal constant\footnote{Throughout this manuscript, we will refer to {\it universal constants} when they depend only on dimension and structural properties of the problem, i.e. on $N, p, q, c_1, c_2, c_3, c_4$ and the bounds of $\lambda_0$} $\mathfrak{C}  = \mathfrak{C} (N, p, q, \tau, \lam_+)$ such that for any $x_0 \in \Omega$ fulfilling $B_{\tau}(x_0) \subset \Omega$ and any $r \leq \frac{\tau}{2}$, the following estimate holds:
\begin{equation}\label{estim}
  \displaystyle \sup_{B_r(x_0)} u(x) \leq \mathfrak{C}  \max\left\{\inf_{B_r(x_0)} u(x), \,\, r^{\frac{p}{p-1-q}} \right\}.
\end{equation}
Particularly, if $x_0 \in \partial\{u>0\}\cap \Omega$ (a free boundary point), then
$$
   \displaystyle \sup_{B_r(x_0)} u(x)  \leq \mathfrak{C} r^{\frac{p}{p-1-q}}
$$
for all $0< r< \min\{1, \frac{\dist(x_0, \partial \Omega)}{2}\}$.
\end{theorem}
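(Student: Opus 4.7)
The plan is a rescaling-plus-contradiction argument, exploiting the fact that $\alpha := \frac{p}{p-1-q}$ is the critical scaling exponent for equation \eqref{Maineq}. A direct chain-rule computation shows that, given any $x_0 \in \Omega$, $r > 0$ and $M > 0$, the function $v(y) := u(x_0 + ry)/M$ is a weak solution on $B_1$ of
\begin{equation*}
-\div_y\bigl(\tilde\Phi(y, v, \nabla v)\bigr) + \lambda_0(x_0 + ry)\,\frac{r^p}{M^{p-1-q}}\, v^q = 0,
\end{equation*}
where the rescaled flux $\tilde\Phi(y, z, \xi) := (r/M)^{p-1}\Phi(x_0 + ry, Mz, (M/r)\xi)$ continues to satisfy (H1)--(H4) with the \emph{same} ellipticity/growth constants $c_1, c_3$, and with $c_2, c_4$ replaced by $c_2 r^p$ and $c_4 r^{p-1}$, hence uniformly bounded once $r$ is small. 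For the critical choice $M = r^\alpha$ the forcing coefficient becomes exactly $\lambda_0(x_0 + ry)$, uniformly bounded, and this is precisely why the exponent $\alpha$ is what it is.

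Next, I would suppose \eqref{estim} fails. Then there exist bounded weak solutions $u_j$ and balls $B_{r_j}(x_j) \Subset \Omega$ with $M_j := \sup_{B_{r_j}(x_j)} u_j$ satisfying
\begin{equation*}
    \frac{M_j}{\max\{\inf_{B_{r_j}(x_j)} u_j,\; r_j^{\alpha}\}} \longrightarrow +\infty.
\end{equation*}
Applying the rescaling above with normalization $M = M_j$, the functions $v_j(y) := u_j(x_j + r_j y)/M_j$ satisfy $\sup_{B_1} v_j = 1$, $\inf_{B_1} v_j \to 0$, and a quasi-linear equation whose flux $\tilde\Phi_j$ satisfies (H1)--(H4) \emph{uniformly} in $j$ and whose forcing coefficient $\tfrac{r_j^p}{M_j^{p-1-q}}\lambda_0(x_j+r_j y)$ tends to $0$ (precisely because $M_j / r_j^\alpha \to \infty$). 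By the DiBenedetto--Tolksdorf--Lieberman $C^{1,\beta}_{\loc}$ theory for quasi-linear equations with uniform $p$-structure and bounded right-hand side, $\{v_j\}$ is precompact in $C^{0,\beta}_{\loc}(B_1)$; extracting a subsequence, $v_j \to v_\infty$ locally uniformly with $v_\infty \geq 0$. Using monotonicity (H2) through a Minty--Browder argument, and after a further subsequential extraction ensuring $\tilde\Phi_j \to \tilde\Phi_\infty$ locally uniformly on compacta (where $\tilde\Phi_\infty$ still satisfies (H1)--(H4)), the limit $v_\infty$ is a nonnegative weak solution of the homogeneous limit equation $-\div(\tilde\Phi_\infty(y, v_\infty, \nabla v_\infty)) = 0$ on $B_1$.

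Finally, I would derive the contradiction. After possibly passing to a suitable subdomain to place both the supremum and infimum of $v_\infty$ in the interior, $v_\infty$ is a nontrivial nonnegative weak solution of a quasi-linear equation of $p$-Laplace type that vanishes at an interior point. The strong minimum principle of V\'{a}zquez and Pucci--Serrin for such operators then forces $v_\infty \equiv 0$, contradicting $v_\infty \not\equiv 0$. This establishes \eqref{estim}, and the free boundary conclusion follows at once: for $x_0 \in \partial\{u>0\}\cap\Omega$, continuity of weak solutions gives $u(x_0) = 0$, so $\inf_{B_r(x_0)} u = 0$ and the $\max$ in \eqref{estim} collapses to $r^\alpha$. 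The main technical obstacle is the passage to the limit in the nonlinear flux: $C^{1,\beta}$ compactness alone does not automatically identify $\lim_j \tilde\Phi_j(\cdot, v_j, \nabla v_j)$ with $\tilde\Phi_\infty(\cdot, v_\infty, \nabla v_\infty)$, so one must combine the monotonicity (H2) with the (subsequential) locally uniform convergence of the rescaled operators $\tilde\Phi_j$ themselves; both inputs are made possible by the uniformity of the structural constants verified in the first paragraph.
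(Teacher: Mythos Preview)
Your blow-up/compactness strategy is genuinely different from the paper's, which never passes to a limit: the paper rescales by the critical power $r^{p/(p-1-q)}$, applies Serrin's Harnack inequality (Theorem~\ref{harnack}) to the rescaled solution $v_r$, and then iterates the resulting one-step bound along dyadic radii $r_k = c/2^k$, splitting at each step according to whether the infimum term or the absorption term dominates. Because the Harnack constant depends only on the structural data, the iteration is fully quantitative and no compactness is invoked.

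Your argument contains a genuine gap. The step ``after a further subsequential extraction ensuring $\tilde\Phi_j \to \tilde\Phi_\infty$ locally uniformly on compacta'' is not justified by (H1)--(H4): those hypotheses give uniform \emph{bounds} on $\tilde\Phi_j$ but not equicontinuity. Indeed
$\tilde\Phi_j(y,z,\xi) = (r_j/M_j)^{p-1}\Phi\bigl(x_j + r_j y,\; M_j z,\; (M_j/r_j)\xi\bigr)$,
and when $M_j$ or $M_j/r_j$ is unbounded the inner arguments escape every compact set, so mere continuity of $\Phi$ says nothing about the modulus of continuity of $\tilde\Phi_j$. Without a limit operator you cannot run Minty--Browder and hence cannot assert that $v_\infty$ solves any equation, so there is nothing to feed into the V\'azquez/Pucci--Serrin strong minimum principle. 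A secondary (more easily repaired) issue is that $\sup_{B_1} v_j = 1$ and $\inf_{B_1} v_j \to 0$ need not survive the passage to $v_\infty$ under merely local uniform convergence on $B_1$; the phrase ``passing to a suitable subdomain'' does not arrange this without further input. The natural repair for both problems is to apply the uniform Harnack inequality to the $v_j$ themselves and pass to the limit in \emph{that}, obtaining a Harnack inequality (hence the strong minimum principle) for $v_\infty$ without ever identifying a limit PDE---but at that point you are essentially reproducing the paper's mechanism inside a contradiction wrapper.
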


One more time we must stress the comparison between the regularity coming from dead core solutions and those coming from the classical elliptic Schauder regularity theory. For didactic reasons, let us suppose that $u$ is a weak solution to
\begin{equation}\label{eqDCvsST}
    -\div(\nabla u) + \lambda_0(x)u^{q}(x)\chi_{\{u>0\}} = 0 \quad \text{in} \quad B_1,
\end{equation}
where $0<q<1$ and $\lambda_0 \in C^{0, q}(B_1)$.

Under such assumptions the Schauder regularity theory assures that classical solutions to \eqref{eqDCvsST} are $C_{\text{loc}}^{2, q}(B_1)$ (particularly at free boundary points). On the other hand, our main Theorem  \ref{MThm} claims that $u$ is $C^{\alpha, \beta}$ at free boundary points, where
$$
  \alpha \defeq \left\lfloor \frac{2}{1-q}\right\rfloor \quad \text{and} \quad \beta \defeq \frac{2}{1-q} - \left\lfloor\frac{2}{1-q}\right\rfloor.
$$
Nevertheless, we must highlight that for any $0<q<1$ one have
$$
  \frac{2}{1-q} > 2+q,
$$
which means that dead core solutions are more regular, along free boundary points, that the available  best regularity result coming from classical regularity theory.

By way of motivation, we must highlight that \eqref{Maineq} (provided $\Phi$ does not depend on $u$) can be understood as the Euler-Lagrange equation related to the the following nonlinear minimization problem
\begin{equation} \label{vari}
	\min \int_\Omega \mathcal{F}(x, u, \nabla u) \, dx,
\end{equation}
where the minimum is taken over all non-negative functions in $W^{1,p}(\Omega)$ such that $u=g$ on $\partial \Omega$ and
$$
   \mathcal{F}(x, u, \nabla u) = \mathcal{G}(x, \nabla u) + \mathfrak{h}(x, u),
$$
for suitable $\mathcal{G}$ and $\mathfrak{h}$, see Section \ref{Sec6} for more details.
Recall that the study of the variational problem like \eqref{vari} is fairly developed and well-understood currently, see Manfredi \cite{Manf88} for a survey on this subject. As another example, we can cite Leit\~{a}o-Teixeira \cite{LT15}, where it is studied this minimization problem and established several analytic and geometric properties for weak solutions to \eqref{Maineq}.

We must highlight that the approach leading our results (regularity and qualitative/quantitative analysis) are novelties in the literature and differs from the techniques used in \cite{LRS}, \cite{OS}, \cite{OSS}, \cite[Chapter 1]{Diaz}, \cite[Section 4]{Tei16} and \cite{Tei-16}, where different dead core type problems for divergence and non-divergence form operators were studied. Moreover, our results extend, in some extent,  the previous works for obstacle problems with zero constraint, namely, when $q=0$ (see Shahgholian \textit{et al} \cite{KKPS} and \cite{LeeShah} for some examples on this subject). Finally, we also lead to interesting results in the $p$-Laplace setting, i.e., as $\Phi(x, u, \nabla u) = |\nabla u|^{p-2}\nabla u$ (compare with \cite{daSRS}).

 In contrast with the classical obstacle problem for the Laplace operator with zero constraint, which admits $C^{1, 1}$ solutions, the passage to the general quasi-linear heterogeneous counterpart carries several difficulty levels. The first one is the nonlinear character of the problem. The second one is the lack of homogeneity for such a general quasi-linear elliptic operators. Another pivotal feature of dead core problems consists in that $|\nabla u|$ vanishes at free boundary points, i.e. the operator becomes degenerate/singular along the free boundary. For this very reason, controlling solutions near the free boundary is a non-trivial task. Finally, we must deal with a more complicated form of the Harnack's inequality, as well as \textit{a priori} gradient estimates, which are few develop in the literature.

In conclusion our paper is organized as follows: In Section \ref{Section2} we present the structural properties of the operators that we treat throughout the article. Yet in Section \ref{Section2}, we deliver some results about quasi-linear elliptic problems which are useful in our studies. In Section \ref{Sec3}, we deliver a proof of Theorem \ref{MThm} and its consequences. Section \ref{Sec3} is devoted to  analyse the borderline case, i.e., as $q=p-1$, where we prove that solutions cannot vanish at interior points  unless they are identically zero. Section \ref{Sec4} is devoted to prove some weak geometric properties such as non-degeneracy. More precisely, a dead core solution $u$ leaves the free boundary precisely as
$$
  u(x) \geq \dist(x_0, \partial \{u > 0\})^{\frac{p}{p-1-q}} \quad \forall \,\,\, x \in \overline{\{u>0\}}\cap \Omega.
$$
As consequence from this pivotal geometric information, we obtain positive density and porosity of the free boundary. Section \ref{Sec5} is dedicated to applications of the main results, such as Liouville type results. Our Liouville Theorem assures that
$$
  u(x) \geq \Theta(\lambda_0, N, p, q)|x|^{\frac{p}{p-1-q}} \quad \forall \,\,\,|x| \gg 1
$$
for a suitable constant $\Theta(\lambda_0, N, p, q)>0$ unless that $u$ be identically zero. Finally, in Section \ref{Sec6}, we study the finiteness of $(N-1)$-Hausdorff measure estimates of free boundary. Precisely, we will show that for a particular class of problems modelled by \eqref{Maineq} we have that
$$
   \mu_{\Phi}  = \div(\Phi(x, \nabla u)) - (q+1)\lambda_0(x)u_{+}^{q}\chi_{\{u>0\}}
$$
defines a non-negative Radon measure supported along the free boundary. Consequently, for such class, we prove that
$$
  \mathfrak{c}_0r^{N-1} \leq \mathcal{H}^{N-1}(B_r(x_0) \cap \partial \{u>0\}) \leq \mathfrak{C}_0r^{N-1}
$$
for universal constants $\mathfrak{c}_0, \mathfrak{C}_0>$ and any ball $B_r(x_0)$ centred in free boundary points.

\section{Preliminaries}\label{Section2}

\hspace{0.6cm}\textbf{Notations.} Let us start this section by introducing some notations which we shall use throughout this article.
\begin{itemize}
\item[\checkmark] $N$ denotes de dimension of Euclidean space $\R^N$.
\item[\checkmark] $u_{+} = \max\{0, u\}$.
\item[\checkmark] $\mathfrak{F}(u_0, \Omega) \defeq \partial \{u_0 >0\} \cap \Omega$ shall mean the free boundary.
\item[\checkmark] $\mathcal{L}^N$ denotes the $n$-dimensional Lebesgue measure.
\item[\checkmark] $\mathcal{H}^{N-1}$ denotes the $(N-1)$-dimensional Hausdorff measure.
\item[\checkmark] $\Omega^{\prime} \Subset \Omega$ means that $\Omega^\prime\subset\overline{\Omega^\prime}\subset\Omega$, and $\overline{\Omega^\prime}$ is compact ($\Omega^\prime$ is compactly contained in $\Omega$).
\item[\checkmark] $B_r(x_0)$ denotes the ball of center $x_0$ and radius $r$. When the center is $0$ we just write $B_r$.
\item[\checkmark] $$\displaystyle \mathcal{S}_r[u](x_0) \defeq   \sup_{B_r(x_0)} u(x), \qquad  \displaystyle \mathcal{I}_r[u](x_0) \defeq   \inf_{B_r(x_0)} u(x).
$$
Moreover, we will omit the center of the ball when $x_0 = 0$.

\end{itemize}

\begin{definition}[{\bf Weak solution}] We say that $u \in L^1_{\text{loc}}(\Omega)$ is a weak supersolution (resp. subsolution) to
\begin{equation}\label{eqweaksol}
     -\div(\Phi(x, u, \nabla u)) + \Psi(x, u, \nabla u) = 0 \quad \text{in }  \Omega,
\end{equation}
if it is weakly differentiable in $\Omega$, i.e., all its weak derivatives of first order exist, $\Phi(\cdot, u, \nabla u) \in L^1_{\text{loc}}(\Omega)$ and for all $0\leq \varphi \in C^1_0(\Omega)$ it holds that
$$
\displaystyle \int_{\Omega} \Phi(x, u, \nabla u)\cdot \nabla \varphi(x)\,dx \geq \int_{\Omega} \Psi(x, u, \nabla u)\varphi(x)\,dx \quad \left(\text{resp.}\,\,  \leq \int_{\Omega} \Psi(x, u, \nabla u)\varphi(x)\,dx\right).
$$
Finally, we say that $u$ is a weak solution to \eqref{eqweaksol} when it is simultaneously a weak super-solution and a weak sub-solution.
\end{definition}

 We will also use the following useful comparison result for weak solutions (see for instance \cite{Diaz}).

\begin{lemma}[{\bf Comparison Principle}]\label{comparison} Let $\Omega \subset \R^N$ be a bounded open set, $\lambda_0 \in L^{\infty}(\Omega)$ and $f \in C([0, \infty))$ a non-negative and non-decreasing function. Assume that we have that
$$
 -\div(\Phi(x, v, \nabla v)) + \lambda_0(x)f(v)\chi_{\{v>0\}} \leq 0 \leq -\div(\Phi(x, u, \nabla u)) + \lambda_0(x)f(u)\chi_{\{u>0\}} \quad \mbox{in} \quad \Omega.
$$
in the weak sense with $\Phi$ fulfilling (H1)--(H4). If $v \leq u $ in $\partial \Omega$ then $v \leq u$ in $\Omega$.
\end{lemma}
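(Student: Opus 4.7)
The plan is to run a standard energy/comparison argument using $\varphi = (v-u)_+$ as an admissible test function. Since $v \leq u$ on $\partial\Omega$, the positive part $(v-u)_+$ has zero trace on $\partial\Omega$ and, by (H3)--(H4), belongs to $W^{1,p}_0(\Omega)$ (after an approximation by compactly supported $C^1$ functions). Choosing this as test function in the weak sub-solution inequality for $v$ and in the weak super-solution inequality for $u$, and then subtracting them, yields
\begin{equation*}
  \int_{\{v>u\}} \bigl[\Phi(x,u,\nabla u)-\Phi(x,v,\nabla v)\bigr]\cdot(\nabla v-\nabla u)\,dx \;\geq\; \int_{\{v>u\}} \lambda_0(x)\bigl[f(v)\chi_{\{v>0\}}-f(u)\chi_{\{u>0\}}\bigr](v-u)\,dx.
\end{equation*}

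First I would analyse the right-hand side. On the set $\{v>u\}$ three subcases occur: (i) $u>0$ (so also $v>0$): monotonicity of $f$ gives $f(v)\geq f(u)$, hence the integrand is $\geq 0$; (ii) $u\leq 0$ and $v>0$: the integrand equals $\lambda_0 f(v)(v-u)\geq 0$; (iii) $v\leq 0$: then $v>u$ forces $\chi_{\{v>0\}}=\chi_{\{u>0\}}=0$, contributing nothing. Consequently the right-hand side is non-negative.

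For the left-hand side, I would use the monotonicity hypothesis (H2), splitting
\begin{equation*}
  \bigl[\Phi(x,u,\nabla u)-\Phi(x,v,\nabla v)\bigr]\cdot(\nabla v-\nabla u) = -\bigl[\Phi(x,v,\nabla v)-\Phi(x,v,\nabla u)\bigr]\cdot(\nabla v-\nabla u) + R(x),
\end{equation*}
where $R(x)$ gathers the cross-term coming from the $z$-slot of $\Phi$; the principal term is $\leq 0$ by (H2). Combined with the previous step, the only way both inequalities can coexist is if
\begin{equation*}
  \bigl[\Phi(x,v,\nabla v)-\Phi(x,v,\nabla u)\bigr]\cdot(\nabla v-\nabla u)=0 \quad \text{a.e. on } \{v>u\},
\end{equation*}
and the absorption term vanishes as well. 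From the strict monotonicity of the $p$-structure in the gradient variable (which, by (H3), makes (H2) strict off the diagonal) I would deduce $\nabla v=\nabla u$ a.e. on $\{v>u\}$, i.e.\ $\nabla (v-u)_+\equiv 0$ in $\Omega$. Since $(v-u)_+\in W^{1,p}_0(\Omega)$, the Poincar\'e inequality forces $(v-u)_+\equiv 0$, which is the desired conclusion $v\leq u$ in $\Omega$.

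The main obstacle I anticipate is the cross-term $R(x)$ arising from the $z$-dependence of $\Phi$: hypothesis (H2) is stated only in the gradient slot, so in its full generality the term $\Phi(x,v,\nabla u)-\Phi(x,u,\nabla u)$ has no a priori sign. In practice one handles this by either restricting to operators of the form $\Phi(x,\nabla u)+\text{(monotone zero-order perturbation)}$ (which is the setting of the prototype examples in the paper, where the $z$-term $\mathfrak{b}(x)|z|^{p-2}z$ can be reabsorbed into the absorption part $\lambda_0 f(u)$ and retains the correct monotonicity), or by invoking a Lipschitz-in-$z$ bound together with an exponential change of variables $\tilde v=v\,e^{-Kx_1}$ to remove the bad sign. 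I would flag this point and then carry out the above argument in the case where $R(x)$ has a favourable sign, citing \cite{Diaz} for the technical modification in the fully $z$-dependent case.
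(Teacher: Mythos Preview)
The paper does not actually prove this lemma; it merely states it and cites D\'{i}az's monograph \cite{Diaz} for the argument. So there is no ``paper's own proof'' to compare against. Your outline---testing with $(v-u)_+$, using the monotonicity of $f$ and of $\Phi$ in the gradient slot, and concluding via Poincar\'e---is exactly the standard route one finds in \cite{Diaz}, and you have correctly flagged the only genuine obstruction, namely the cross-term coming from the $z$-dependence of $\Phi$, which (H2) does not control.

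Two small points are worth tightening. First, your claim that ``(H3) makes (H2) strict off the diagonal'' is not quite right: the coercivity condition $\langle \Phi(x,z,\xi),\xi\rangle \geq c_1|\xi|^p - c_2|z|^p$ does not by itself imply strict monotonicity in $\xi$; one needs either to assume it directly or to appeal to the specific structure of the prototype operators (for which it does hold). Second, your right-hand-side sign analysis tacitly uses $\lambda_0\geq 0$, which is the standing assumption in the paper but is not stated in the lemma itself. Both are harmless in context, but if you are writing this up independently you should make these hypotheses explicit rather than derive them from (H1)--(H4).
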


An consequence from Lemma \ref{comparison} is the following existence of dead core solutions.

\begin{theorem}[{\bf Existence/uniqueness of dead core solutions}]\label{ExisThm} Suppose that the assumptions of Lemma \ref{comparison} are satisfied. If $u_{\ast}$ is a sub-solution and $u^{\ast}$ is a super-solution to \eqref{DCP1} with $g \in C^0(\partial \Omega)$ not identically zero such that $u_{\ast} = g = u^{\ast}$ on $\partial \Omega$, then there exists a non-negative function $u$ fulfilling \eqref{DCP1} in the weak sense with $u_{\ast} \leq u \leq u^{\ast}$ in $\Omega$. Moreover, such a solution is unique.
\end{theorem}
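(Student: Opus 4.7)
\emph{Uniqueness} is immediate from Lemma~\ref{comparison}. If $u_1$ and $u_2$ are two weak solutions of \eqref{DCP1} sharing the boundary datum $g$, each is simultaneously a sub- and a supersolution. Applying the comparison principle once with $(v,u)=(u_1,u_2)$ and once with the roles reversed yields $u_1\leq u_2$ and $u_2\leq u_1$, whence $u_1\equiv u_2$.

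For \emph{existence}, my plan is to build a monotone decreasing iteration starting from $u^\ast$. Set $u_0:=u^\ast$, and, inductively, given $u_n \in L^\infty(\Omega)\cap W^{1,p}(\Omega)$ with $u_\ast\leq u_n\leq u^\ast$, define $u_{n+1}$ as the unique weak solution of the quasi-linear boundary-value problem
$$-\div\big(\Phi(x,u_n,\nabla u_{n+1})\big) = -\lambda_0(x)\,f(u_n)\,\chi_{\{u_n>0\}} \quad \text{in }\Omega, \qquad u_{n+1}=g \quad \text{on }\partial\Omega.$$
Existence and uniqueness of $u_{n+1}$ follow from the Browder--Minty theorem: the operator $w\mapsto -\div(\Phi(x,u_n,\nabla w))$ is hemicontinuous, monotone and coercive on the affine space $g+W_0^{1,p}(\Omega)$ thanks to (H1)--(H4), while the forcing lies in $L^\infty(\Omega)\subset W^{-1,p'}(\Omega)$. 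I would then prove by induction that $u_\ast\leq u_{n+1}\leq u_n$. For the upper bound, subtract the inequality satisfied by $u_n$ (as a supersolution of the linearized problem from the previous step, or of the original equation when $n=0$) from the equation for $u_{n+1}$, test with $(u_{n+1}-u_n)_+\in W_0^{1,p}(\Omega)$, and invoke (H2) combined with the monotonicity of $f$; the lower bound against $u_\ast$ follows by the same device.

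The delicate step is \emph{passing to the limit}. Testing the equation for $u_{n+1}$ against $u_{n+1}-g$ and exploiting (H3)--(H4) yields a uniform $W^{1,p}(\Omega)$ bound, so by monotone convergence $u_n\downarrow u$ in $L^p(\Omega)$ while, along a subsequence, $\nabla u_n\rightharpoonup \nabla u$ weakly in $L^p(\Omega;\R^N)$. The principal obstacle is identifying the weak limit of the nonlinear flux $\Phi(x,u_n,\nabla u_{n+1})$ with $\Phi(x,u,\nabla u)$ in the absence of strong gradient convergence. I would resolve this via the Minty--Browder trick: by (H2), the integral of $\langle \Phi(x,u_n,\nabla u_{n+1})-\Phi(x,u_n,\nabla \varphi), \nabla u_{n+1}-\nabla \varphi\rangle$ is non-negative for every $\varphi\in g+W_0^{1,p}(\Omega)$; one passes to the weak limit using the equation and the continuity hypothesis (H1), and then lets $\varphi\to u$ to recover the identification. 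The absorption term converges pointwise a.e.\ by continuity of $f$ (with $f(0)=0$), and the uniform $L^\infty$ bound justifies the limit by dominated convergence. The resulting $u$ is the desired weak solution, trapped between $u_\ast$ and $u^\ast$ by the monotonicity of the construction.
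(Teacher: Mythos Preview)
The paper does not actually supply a proof of this theorem; it is stated as a direct consequence of the Comparison Principle (Lemma~\ref{comparison}), with implicit reference to the standard sub-/super\-solution machinery in D\'{\i}az's monograph~\cite{Diaz}. Your uniqueness argument is exactly what is intended and is complete.

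For existence, your monotone-iteration scheme is a natural route, but there is a genuine gap in the induction step whenever $\Phi$ depends nontrivially on its second argument. To propagate $u_{n+1}\leq u_n$ for $n\geq 1$ you need $u_n$ to be a supersolution of the problem \emph{frozen at level $n$}, namely
\[
-\div\big(\Phi(x,u_n,\nabla u_n)\big)\ \geq\ -\lambda_0(x)\,f(u_n)\,\chi_{\{u_n>0\}}.
\]
What the previous step actually gives is
\[
-\div\big(\Phi(x,u_{n-1},\nabla u_n)\big)\ =\ -\lambda_0(x)\,f(u_{n-1})\,\chi_{\{u_{n-1}>0\}},
\]
and hypotheses (H1)--(H4) impose \emph{no} monotonicity of $z\mapsto \Phi(x,z,\xi)$ that would let you convert the latter into the former (the absorption term goes the right way because $f$ is increasing, but the principal part does not). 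The same obstruction blocks the lower barrier $u_\ast\leq u_{n+1}$, and hence the whole monotone-sequence structure on which your limit argument rests. If you test directly with $(u_{n+1}-u_n)_+$ you face the cross-term $\langle \Phi(x,u_n,\nabla u_n)-\Phi(x,u_{n-1},\nabla u_n),\nabla(u_{n+1}-u_n)_+\rangle$, which (H2) says nothing about.

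Two clean fixes: either restrict to $\Phi=\Phi(x,\xi)$ (this is in fact the only case used in Sections~\ref{Sec4}--\ref{Sec6}, where the stronger assumptions (A1)--(A2) and the variational structure are imposed), in which case your scheme goes through verbatim; or add the structural condition that $z\mapsto \Phi(x,z,\xi)$ is non-increasing on $\R_+$ (satisfied by all the examples listed after (H4) once the lower-order term $\mathfrak{b}(x)|z|^{p-2}z$ is moved to the right-hand side), which restores the needed inequality at each step.
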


The following Serrin's Harnack inequality will be useful for our arguments. For the case $p<N$, this inequality follows by specializing \cite[Theorem 5]{Ser64}, with $\ve=1$, $R=\frac{1}{2}$, $\alpha=p$, $e=g=c=d=0$ and $\displaystyle f=\lam_+\sup_{B_{1/2}} u^q$.  The case $p\geq N$ is obtained by using \cite[Theorem 6 and Theorem 9]{Ser63} with $R=\frac{1}{2}$, $\alpha=p$, $e=g=0$ and $\displaystyle f=\lam^+ \sup_{B_{1/2}} u^q$.

\begin{theorem}[{\bf Harnack inequality}] \label{harnack}
Let $u$ be a non-negative weak solution to \eqref{Maineq} in the ball
	 $B_\frac{3}{2}$ with $\Phi$ fulfilling (H1)--(H4). Then
	\begin{equation*}
	  \displaystyle \mathcal{S}_{\frac{1}{2}}[u] \leq C(N, p)\left( \mathcal{I}_{\frac{1}{2}}[u]+\lam_+^{\frac{1}{p-1}}\mathcal{S}_1[u] ^{\frac{q}{p-1}}\right).
	\end{equation*}
\end{theorem}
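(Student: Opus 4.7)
The plan is to reduce the statement to Serrin's classical Harnack inequality for quasi-linear elliptic equations, treating the strong absorption term as a pointwise-bounded forcing on the right-hand side. Since $u \geq 0$, the equation \eqref{Maineq} can be recast as
\begin{equation*}
  -\div(\Phi(x, u, \nabla u)) = -\lambda_0(x)\, u^{q} \chi_{\{u>0\}} \quad \text{in } B_{3/2},
\end{equation*}
and the right-hand side is bounded in absolute value by $\lambda_+\, \mathcal{S}_{1}[u]^q$, a constant on $B_1$. This converts the nonlinear absorption into a uniform forcing that fits directly into the framework of \cite{Ser63, Ser64}.

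Next I would verify that hypotheses (H1)--(H4) translate cleanly into Serrin's structural assumptions. The $p$-ellipticity (H3) and the $p$-growth (H4) supply his bounds on the principal part, while the extra $|z|^p$ and $|z|^{p-1}$ contributions merely play the role of zero-order coefficients, absorbed into the universal constant $C(N,p)$. The monotonicity (H2) and continuity (H1) justify the weak formulation and the De Giorgi--Moser iteration underlying Serrin's argument. Under this correspondence, we have $e = g = c = d = 0$ in Serrin's notation and $f := \lambda_+\, \mathcal{S}_{1}[u]^q$.

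I would then split into the two cases according to the position of $p$ relative to $N$. For $p<N$, an application of \cite[Theorem 5]{Ser64} with $\varepsilon = 1$, $R = \tfrac{1}{2}$, $\alpha = p$ yields
\begin{equation*}
  \mathcal{S}_{1/2}[u] \leq C(N,p)\Bigl(\mathcal{I}_{1/2}[u] + f^{\frac{1}{p-1}}\Bigr).
\end{equation*}
For $p\geq N$, the same estimate follows by combining \cite[Theorem 6]{Ser63} and \cite[Theorem 9]{Ser63} with the analogous parameter choices. Substituting the value of $f$ in either case produces exactly the claimed bound
\begin{equation*}
  \mathcal{S}_{1/2}[u] \leq C(N,p)\Bigl(\mathcal{I}_{1/2}[u] + \lambda_+^{\frac{1}{p-1}} \mathcal{S}_{1}[u]^{\frac{q}{p-1}}\Bigr).
\end{equation*}

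The main obstacle is purely bookkeeping: aligning the structural parameters of the present paper's vector field $\Phi$ with the multi-coefficient formulation used in Serrin's papers, and checking that the lower-order $|z|^p$-type terms in (H3)--(H4) do not interfere with the hypotheses of the quoted theorems but only inflate the universal constant. The conceptual point driving the result is that the absorption term, though nonlinear, is nonnegative and controlled pointwise by the local supremum of $u^q$, which is precisely what permits its reinterpretation as an admissible right-hand side in the Serrin--Moser regime.
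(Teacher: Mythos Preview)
Your proposal is correct and follows essentially the same route as the paper: the paper does not give a separate proof, but in the paragraph preceding the theorem it derives the inequality by specializing Serrin's results exactly as you do, citing \cite[Theorem~5]{Ser64} for $p<N$ (with $\varepsilon=1$, $R=\tfrac12$, $\alpha=p$, $e=g=c=d=0$) and \cite[Theorems~6 and~9]{Ser63} for $p\geq N$, with the absorption term absorbed into the forcing parameter $f$. The only cosmetic difference is that the paper writes $f=\lambda_+\sup_{B_{1/2}}u^q$ while you take $f=\lambda_+\,\mathcal{S}_1[u]^q$; your choice is in fact the one consistent with the final statement, and either bound suffices.
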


\section{Sharp regularity estimates along free boundary}\label{Sec3}

In this section we will obtain sharp regularity estimates for weak solutions to dead core type problems. Such results come out by combining the Serrin's Harnack inequality with the scaling invariance of the equation and the optimal scaling for solutions at free boundary points.

\vspace{0.3cm}

{\bf Proof of Theorem \ref{MThm}}: Notice that the scaled and normalized function
$$
  v_r(x) \defeq \frac{u(x_0 + rx)}{r^{\frac{p}{p-1-q}}},
$$
satisfies the equation
$$
    -\div(\Phi_r(x, v_r, \nabla v_r)) + \tilde\lambda_0(x)(v_r)_{+}^q  = 0\quad \text{in} \quad B_1,
$$
in the weak sense, where
$$
  \Phi_r(x, s,\xi) \defeq r^{-\frac{(1+q)(p-1)}{p-1-q}}\Phi\left(x_0+rx,r^\frac{p}{p-1-q}s,r^{\frac{1+q}{p-1-q}}\xi\right) \quad \text{and} \quad \tilde\lambda_0(x) \defeq \lambda_0(x_0 + rx).
$$

Observe that $\Phi_r$ satisfies the same structural conditions as $\Phi$.  Indeed, (H1) and (H2) are trivially satisfied. Condition (H3) on $\Phi$ implies that it is also fulfilled by $\Phi_r$
\begin{align*}
	 \langle \Phi_r(x,s,\xi),\xi\rangle  &= \left\langle r^{-\frac{(1+q)(p-1)}{p-1-q}} \Phi\left(x_0+rx,r^\frac{p}{p-1-q}s,r^\frac{1+q}{p-1-q}\xi\right),\xi\right\rangle\\
	&=r^{-\frac{(1+q)p}{p-1-q}} \left\langle  \Phi\left(x_0+rx,r^\frac{p}{p-1-q}z,r^\frac{1+q}{p-1-q}\xi\right),r^\frac{1+q}{p-1-q} \xi\right\rangle\\	
	&\geq c_1 |\xi|^p  - c_2 |z|^p r^p\\	
	&\geq c_1 |\xi|^p - c_2 |z|^p
\end{align*}
since $0\leq q < p-1$ and $0<r<1$.

Similarly, condition (H4) on $\Phi$ implies that it is also fulfilled by $\Phi_r$,
\begin{align*}
	|\Phi_r(x,z,\xi)|&\leq r^{-\frac{(1+q)(p-1)}{p-1-q}} \left(c_3 |z|^{p-1}r^\frac{p(p-1)}{p-1-q} +c_4 |\xi|^{p-1} r^\frac{(p-1)(1+q)}{p-1-q}\right)\\
	&=c_3 |z|^{p-1} r^{p-1} + c_4 |\xi|^{p-1} \\
	&\leq c_3 |z|^{p-1}  + c_4 |\xi|^{p-1}
\end{align*}
since $p>1$ and $0<r<1$.

By applying   Theorem \ref{harnack} we obtain
\begin{equation}\label{HarnIneq}
  \displaystyle \mathcal{S}_\frac12 [v_r] \leq C(N, p)\left(\mathcal{I}_{\frac{1}{2}} [v_r] +\lam_+^{\frac{1}{p-1}}\mathcal{S}_1 [v_r]^{\frac{q}{p-1}}\right).
\end{equation}
For $q=0$, by scaling back \eqref{HarnIneq} in terms of $u$, the proof is immediate, see \cite{LeeShah}. Thus, consider $0<q<p-1$ and $\frac{\tau}{4} \leq c \leq \frac{\tau}{2}$. In order to iterate \eqref{HarnIneq} in relation to $r_k \defeq \frac{c}{2^k}$ for $0\leq k \leq n_0$ such that $r_{n_0} = r$ for some $n_0 \in \mathbb{N}$, we must consider two possibilities.

First, if it holds that
$$
    r_{k}^{\frac{-p}{p-1-q}} \mathcal{I}_{r_k} [u](x_0)   \leq  \lam_+^{\frac{1}{p-1}} \left( r_{k-1}^{\frac{-p}{p-1-q}} \mathcal{S}_{r_{k-1}} [u](x_0) \right)^{\frac{q}{p-1}} \quad \text{for} \quad 1\leq k\leq n_0,
  $$
we arrive at
 $$
  \displaystyle   r_{n_0}^{\frac{-p}{p-1-q}} \mathcal{S}_{r_{n_0}} [u](x_0) \leq C(N, p, q, \lam_+)\left( r_0^{\frac{-p}{p-1-q}} \mathcal{S}_{r_0} [u](x_0)\right)^\frac{q}{p-1} \leq \mathfrak{C}
 $$
 from where we deduce that $\displaystyle \mathcal{S}_{r}[u](x_0)\leq \mathfrak{C} r^{\frac{p}{p-1-q}}$.

 Conversely, if for some $k_0\leq n_0$ it holds that
  $$
	  r_{k_{0}}^{\frac{-p}{p-1-q}} \mathcal{I}_{r_{k_0}} [u](x_0) \geq  \lam_+^{\frac{1}{p-1}} \left( r_{k_0-1}^{\frac{-p}{p-1-q}} \mathcal{S}_{r_0-1} [u](x_0) \right)^{\frac{q}{p-1}}
  $$
  and
  $$
 r_{k}^{\frac{-p}{p-1-q}} \mathcal{I}_{r_k} [u](x_0)  \leq  \lam_+^{\frac{1}{p-1}} \left( r_{k-1}^{\frac{-p}{p-1-q}} \mathcal{S}_{r_{k-1}} [u](x_0) \right)^{\frac{q}{p-1}}  \quad \text{for} \quad k_0< k\leq N_0,
  $$
  then  we arrive at
  $$
      r_{n_0}^{\frac{-p}{p-1-q}} \mathcal{S}_{r_{n_0}} [u](x_0) \leq \mathfrak{c} r_{k_0}^{\frac{-p}{p-1-q}}  \mathcal{I}_{r_{k_0}} [u](x_0) \leq \mathfrak{c} r_{n_0}^{\frac{-p}{p-1-q}} \mathcal{I}_{r_{n_0}} [u](x_0),
  $$
  where $\mathfrak{c}=\mathfrak{c}(N, p, q, \lam_+)$, from where the proof of the result follows.\qed

\vspace{0.3cm}

 As an immediate consequence of Theorem \ref{MThm} we obtain a finer decay near free boundary points. Precisely, a dead core solution $u$ arrives at its null set as the distance to the free boundary.

\begin{corollary}\label{CorControldist}
Let $u$ be a weak solution to \eqref{Maineq} and $x_0 \in \{u>0\} \cap \Omega$. Then, for a universal constant $\mathfrak{C}>0$ there holds that
$$
  u(x_0) \leq \mathfrak{C}.\dist(x_0, \partial \{u>0\})^{\frac{p}{p-1-q}}.
$$
\end{corollary}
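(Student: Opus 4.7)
The plan is to deduce this pointwise decay estimate directly from the sharp growth estimate at free boundary points furnished by Theorem \ref{MThm}, by placing a ball centered at the nearest free boundary point.

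Fix $x_0 \in \{u>0\}\cap \Omega$ and set $d \defeq \dist(x_0, \partial\{u>0\})$. Since $\partial\{u>0\}$ is closed and $x_0 \notin \partial\{u>0\}$, if $d$ is finite we may select $y_0 \in \partial\{u>0\}$ realizing the distance, i.e. $|x_0-y_0|=d$. Assume first that $y_0$ is an \emph{interior} free boundary point, i.e. $y_0 \in \mathfrak{F}(u,\Omega) = \partial\{u>0\}\cap \Omega$; the case $y_0 \in \partial\Omega$ is postponed to the end. The key geometric observation is that $x_0 \in \overline{B_d(y_0)} \subset B_{2d}(y_0)$, so that by continuity of $u$ (recall $u \in C^{1,\gamma}_{\loc}$),
$$
u(x_0) \le \sup_{B_{2d}(y_0)} u = \mathcal{S}_{2d}[u](y_0).
$$

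Next I would invoke the second (free boundary) assertion of Theorem \ref{MThm} at the point $y_0$. Provided $2d < \min\{1,\dist(y_0,\partial\Omega)/2\}$ this yields
$$
\mathcal{S}_{2d}[u](y_0) \le \mathfrak{C}\,(2d)^{\frac{p}{p-1-q}} = 2^{\frac{p}{p-1-q}}\mathfrak{C}\, d^{\frac{p}{p-1-q}},
$$
with $\mathfrak{C}$ universal, and combining with the previous display delivers the desired estimate with constant $\mathfrak{C}' \defeq 2^{p/(p-1-q)}\mathfrak{C}$.

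The one delicate point — and the main obstacle — is the admissibility range for the radius in Theorem \ref{MThm}, namely $r < \min\{1,\dist(y_0,\partial\Omega)/2\}$, together with the possibility that $y_0$ lies on $\partial \Omega$ rather than in $\Omega$. If $d$ fails the smallness condition (equivalently, $x_0$ is far from the free boundary on the scale of the domain), then $d$ is bounded below by a positive universal quantity $d_0$ depending on the subdomain, and one simply absorbs the pointwise bound into the $L^\infty$-norm of $u$ via
$$
u(x_0) \le \|u\|_{L^\infty(\Omega)} \le \frac{\|u\|_{L^\infty(\Omega)}}{d_0^{\,p/(p-1-q)}}\, d^{\frac{p}{p-1-q}},
$$
thereby enlarging $\mathfrak{C}$. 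The case $y_0 \in \partial\Omega$ is handled analogously: either a true interior free boundary point exists within comparable distance (reducing to the main argument up to a universal dilation), or the distance to $\mathfrak{F}(u,\Omega)$ is comparable to the diameter of $\Omega$, in which case the boundedness argument above applies. This closes the proof.
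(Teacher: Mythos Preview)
Your argument is correct and matches the approach the paper uses implicitly (the paper states the corollary as ``an immediate consequence of Theorem \ref{MThm}'' without giving details, and your detour through the nearest free boundary point $y_0$ with radius $2d$ is exactly how the paper argues the analogous step in the proof of Corollary \ref{GradEst}). One slightly cleaner alternative, which may be what the paper has in mind by ``immediate'': apply the \emph{first} estimate of Theorem \ref{MThm} directly at $x_0$ with $r=d$; since $\overline{B_d(x_0)}$ touches $\partial\{u>0\}$ one has $\mathcal{I}_d[u](x_0)=0$ by continuity, whence $u(x_0)\le \mathcal{S}_d[u](x_0)\le \mathfrak{C}\max\{0,d^{p/(p-1-q)}\}=\mathfrak{C}\,d^{p/(p-1-q)}$, bypassing the nearest-point construction and the $y_0\in\partial\Omega$ case entirely.
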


\begin{remark}
 Following the same arguments that in the proof of Theorem \ref{MThm}, it is possible to obtain similar regularity estimates for a family of problems with a general (non $q$-homogeneous) non-linear absorption term $\mathfrak{f}: [0, \|u\|_{\infty}] \to \R_{+}$, i.e.,
$$
    -\div(\Phi(x, u, \nabla u)) + \mathfrak{f}(u)  = 0\quad \mbox{in} \quad \Omega,
$$
provided $\mathfrak{f}(0)=0$ and for some constant $\mathfrak{C}>0$ it holds that
$$
    \mathfrak{f}(s) \leq \mathfrak{C}s^q,
$$
for all $0< s\ll 1$. Some interesting examples include
$$
\mathfrak{f}(u) = \left\{
\begin{array}{lcl}
  \lambda_0(x)(e^{u_{+}^t}-1) & \text{for}& t \geq q>0 \\
  \lambda_0(x)\ln(u_{+}^t+1) &\text{for}& t \geq  q>0 \\
  \lambda_0(x)u_{+}^q\ln(u^t+1) & \text{for}& t > 0\\
  \lambda_0(x)\frac{u_{+}^q}{(1+u^t)^m} & \text{for}& t> 0 \,\,\,\text{and}\,\,\,0<m\leq q.
\end{array}
\right.
$$

We have chosen the case $\mathfrak{f}(u) = \lambda_0(x)u_{+}^q(x)$ in order to introduce the main ideas of  Theorem  \ref{MThm}.
\end{remark}

\medskip

In this final part we will analyse the critical case $q=p-1$. For this purpose, let us define the operator,
\begin{equation}\label{localeq3}
    \mathcal{Q}_{\Phi}[u](x)  \defeq -\div(\Phi(x, u(x), \nabla u(x))) + \lambda_0(x)u^{p-1}(x).
\end{equation}

Notice that this operator is critical since all the estimates established in the previous sections deteriorate as $q$ approaches $p-1$. Particularly, it follows from Theorem \ref{MThm} that if $u$ vanishes at an interior point $x_0 \in \Omega$, then $D^{k} u (x_0) = 0$  for all $k\in \mathbb{N}$, i.e., any vanishing interior point is an infinite order zero (compare it with the Unique continuation property). By means of a geometric barrier argument (Hopf's boundary type reasoning), which explore the scaling invariance of the operator $\mathcal{Q}_{\Phi}$, we shall prove that a non-negative solution to \eqref{localeq3} cannot vanish at interior points, unless they are identically zero.

\begin{theorem}[{\bf Strong Maximum Principle}] \label{strongmp} Let $u$ be a non-negative weak solution to \eqref{localeq3}. If there exists a point $x_0 \in \Omega$ such that $u(x_0)=0$, then $u \equiv 0$ in $\Omega$.
\end{theorem}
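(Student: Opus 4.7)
The plan is a contradiction argument via an interior Hopf-type barrier. Suppose $u \not\equiv 0$ in $\Omega$ but $u(x_0)=0$ for some $x_0 \in \Omega$. Since weak solutions of \eqref{localeq3} are locally $C^{1,\alpha}$, the set $\{u>0\}$ is open and non-empty. A standard touching-ball argument (pick $y_0 \in \{u>0\}$ strictly closer to $\{u=0\}$ than to $\partial\Omega$, and set $R=\dist(y_0,\{u=0\})$) furnishes $B_R(y_0)\subset\{u>0\}\cap\Omega$ together with a point $z_0\in\partial B_R(y_0)\cap\Omega$ for which $u(z_0)=0$. Compactness of $\partial B_{R/2}(y_0)$ and strict positivity of $u$ on it yield a constant $m>0$ with $u\geq m$ on $\partial B_{R/2}(y_0)$.

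On the annulus $\mathcal{A}\defeq B_R(y_0)\setminus\overline{B_{R/2}(y_0)}$ I would introduce the V\'azquez-type radial barrier
$$v(x)\defeq \delta\bigl(e^{-\mu|x-y_0|^2}-e^{-\mu R^2}\bigr),$$
with parameters $\mu\gg 1$ and $\delta\ll 1$ to be chosen universally. This $v$ is smooth and strictly positive on $\mathcal{A}$, vanishes on $\partial B_R(y_0)$, and satisfies $v\leq m\leq u$ on $\partial B_{R/2}(y_0)$ provided $\delta$ is small. Using (H3), (H4), and the purely radial form of $\nabla v$ (whose modulus is bounded away from $0$ on $\mathcal{A}$), I would verify that for $\mu$ sufficiently large, depending only on $N, p, R, \lambda_+, c_1, \ldots, c_4$,
$$-\div\bigl(\Phi(x,v,\nabla v)\bigr)+\lambda_0(x)\,v^{p-1}\leq 0 \qquad \text{weakly in } \mathcal{A}.$$
The crucial feature is that the principal contribution of $\div\bigl(\Phi(x,v,\nabla v)\bigr)$, estimated below by a $p$-Laplacian model via (H3), carries the Gaussian factor $e^{-\mu(p-1)|x-y_0|^2}$, which is the same factor appearing in $v^{p-1}$; after dividing out this common weight the inequality reduces to a polynomial lower bound in $\mu$ against $\lambda_+$, which holds for $\mu$ large. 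Lemma \ref{comparison}, applied with $f(s)=s^{p-1}$, then yields $v\leq u$ throughout $\overline{\mathcal{A}}$.

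To close the argument, expand $v$ along the inward radial direction $-\nu\defeq -(z_0-y_0)/R$:
$$v(z_0-\epsilon\nu)=2\delta\mu R\,e^{-\mu R^2}\,\epsilon + o(\epsilon)\qquad (\epsilon\to 0^+).$$
Combined with $u\geq v$ in $\mathcal{A}$ and $u(z_0)=0$, this produces $\partial_{-\nu}u(z_0)\geq 2\delta\mu R\,e^{-\mu R^2}>0$. On the other hand, $z_0$ is an interior global minimum of $u\geq 0$, and the $C^{1}$ regularity of weak solutions forces $\nabla u(z_0)=0$, so $\partial_{-\nu}u(z_0)=0$, a contradiction. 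Hence $u\equiv 0$.

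The main obstacle is justifying the sub-solution inequality for a general operator $\Phi$ satisfying only (H1)--(H4), since $\div\bigl(\Phi(x,v,\nabla v)\bigr)$ admits no closed-form expression. My plan is to test the weak formulation against a non-negative $\varphi\in C^1_c(\mathcal{A})$ and use (H3) to extract a $p$-Laplacian-type lower bound for the principal term, absorbing the (H4)-controlled lower-order piece and the absorption $\lambda_0 v^{p-1}$ into the dominant $\mu$-dependent term. It is precisely here that the critical homogeneity $q=p-1$ is essential: both sides of the desired inequality carry matched Gaussian weights, and any other exponent would break the cancellation underpinning this V\'azquez-type construction.
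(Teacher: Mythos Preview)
Your overall strategy---touching-ball setup, Gaussian Hopf barrier on an annulus, comparison principle---coincides with the paper's. The difference lies in how the contradiction is closed. The paper does \emph{not} argue via $\nabla u(z_0)=0$; instead it rewrites \eqref{localeq3} as $-\div(\Phi)+g(x)u_+^{m}=0$ with $g(x)=\lambda_0(x)u_+^{p-1-m}(x)$ for some $\max\{0,p-2\}<m<p-1$, and then invokes Theorem~\ref{MThm} at the touching point to obtain $\mathcal{S}_r[u](z_0)\leq \mathfrak{C}\,r^{p/(p-1-m)}\leq \mathfrak{C}\,r^{p}$. This super-linear decay clashes with the linear lower bound $u(x)\geq \zeta_0\kappa\,|x-z_0|$ inherited from the barrier, giving $\zeta_0\kappa\leq \mathfrak{C}\,|x-z_0|^{p-1}\to 0$. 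Your route via $C^{1}$-regularity and $\nabla u(z_0)=0$ is the classical V\'azquez argument and is more elementary---it bypasses the paper's main regularity theorem entirely---while the paper's detour through Theorem~\ref{MThm} is self-referential but delivers a quantitative decay rate (not further exploited here).

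One caution on the obstacle you flag: your plan to verify the subsolution inequality by testing against $\varphi\in C^{1}_{c}(\mathcal{A})$ and invoking (H3) does not work as stated. Hypothesis (H3) controls only the pairing $\langle\Phi(x,v,\nabla v),\nabla v\rangle$, not $\langle\Phi(x,v,\nabla v),\nabla\varphi\rangle$ for general $\varphi$, so it cannot by itself produce the required upper bound on $\int\Phi\cdot\nabla\varphi$. The paper simply asserts this step as ``a straightforward calculation''; in practice one needs pointwise differentiability of $\Phi$ in $\xi$ (the conditions (A1)--(A2) introduced later in Section~\ref{Sec4}) to compute the divergence and carry out the usual radial computation, so your acknowledged gap is real but is shared with the paper's own presentation.
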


\begin{proof}
The prove will follow by \textit{reductio ad absurdum}. For this end, let $x_0 \in \Omega$ such that $u(x_0) > 0$ and suppose without loss of generality that
$$
   d_0 \defeq \dist(x_0,\partial\{u>0\}) < \frac{1}{5}\dist(x_0,\partial \Omega).
$$
We have that $u$ is locally bounded due to Comparison Principle. Now, for fixed values of $\mathcal{A}>0$ and $s>0$ (large enough) we define the following barrier function:
$$
   \Theta_{\mathcal{A}, s}(x) \defeq \mathcal{A}\frac{e^{-s \frac{|x-x_0|^2}{d_0^2}}-e^{-s}}{e^{-\frac{s}{4}}-e^{-s}},
$$
for which, a straightforward calculation shows that
\begin{equation}\nonumber
\left\{
\begin{array}{rclcl}
\mathcal{Q}_{\Phi}[\Theta_{\mathcal{A}, s}](x) & \leq & 0& \mbox{in} & B_{d_0}(x_0) \setminus B_{\frac{d_0}{2}}(x_0) \\
\Theta_{\mathcal{A}, s} & = & \mathcal{A} & \mbox{in} &  \partial B_{\frac{d_0}{2}}(x_0)\\
 \Theta_{\mathcal{A}, s} & = & 0 & \mbox{in} & \partial B_{d_0}(x_0).
\end{array}
\right.
\end{equation}
Notice that

\begin{equation}\label{gradTheta}
\inf\limits_{B_{d_0(x_0)} \setminus B_{\frac{d_0}{2}}(x_0)}|\nabla \, \Theta_{\mathcal{A}, s} (x) | \geq \frac{\mathcal{A} s d_0^{-1}  e^{-s}}{e^{-\frac{s}{4}}-e^{-s}} \geq \frac{5\mathcal{A}s e^{-s}}{\dist(x_0,\partial \Omega)(e^{-\frac{s}{4}}-e^{-s})}\defeq \kappa >0.
\end{equation}

On the other hand, for any constant $\zeta>0$,  the barrier $\zeta \Theta_{\mathcal{A}, s}$ still being a subsolution in $B_{d_0(x_0)} \setminus B_{\frac{d_0}{2}}(x_0)$. In consequence,
$$
   \mathcal{Q}_{\Phi}[\zeta  \Theta_{\mathcal{A}, s}](x) \leq 0 \leq \mathcal{Q}_{\Phi}[u](x) \quad \text{in} \quad B_{d_0(x_0)} \setminus B_{\frac{d_0}{2}}(x_0).
$$
Moreover, by taking $\zeta_0 \in(0,1)$ small enough such that
$$
  \displaystyle \zeta_0 \mathcal{A} \leq \mathcal{I}_{\frac{d_0}{2}} [u](x_0)
$$
we obtain
$$
    \zeta_0 \Theta_{\mathcal{A}, s} \leq u \quad \mbox{in} \quad \partial B_{d_0}(x_0) \cup \partial B_{\frac{d_0}{2}}(x_0).
$$
Thus, by using Comparison Principle (Lemma \ref{comparison}) we obtain that
\begin{equation}\label{cprin}
    \zeta_0\Theta_{\mathcal{A}, s} \leq  u \quad \mbox{in} \quad B_{d_0}(x_0) \setminus B_{\frac{d_0}{2}}(x_0).
\end{equation}
Now, for any $\max\{0, p-2\}<m<p-1$  \eqref{Maineq} can be rewritten as
$$
   -\div(\Phi(x, u, \nabla u)) + g(x) u_{+}^{m}  = 0 \quad \mbox{in} \quad \Omega,
$$
where $g(x)\defeq \lambda_0(x) u_{+}^{p-m-1}(x)$. Thus, for $z \in \partial B_{d_0} \cap \partial\{u>0\}$,   we can invoke Theorem \ref{MThm}   to obtain
\begin{equation}\label{eqest}
   \mathcal{S}_r [u](z) \leq \mathfrak{C}(N, p, m, \|g\|_{L^{\infty}(\Omega)}) r^{\frac{p}{p-m-1}}\leq \mathfrak{C} r^p
\end{equation}
for $r\ll1$ small enough, since $g$ fulfils the assumptions of such a theorem.

Observe that from \eqref{gradTheta} one can see that
\begin{equation*}
	\zeta_0 \kappa |x-z| \leq  \zeta_0 |\Theta_{\mathcal{A}, s}(x)-\Theta_{\mathcal{A}, s}(z)|= \zeta_0 \Theta_{\mathcal{A}, s}(x).
\end{equation*}
Moreover, from \eqref{cprin} we get
\begin{equation*}
	\zeta_0 \Theta_{\mathcal{A}, s}(x) \leq u(x) \le \mathcal{S}_{|x-z|} [u](z).
\end{equation*}
Mixing up the last two equations together with \eqref{eqest} we obtain that
\begin{equation}\label{eqcontr}
  \zeta_0\kappa  \leq \mathfrak{C} |x-z|^{p-1}
\end{equation}
for all $x \in (B_{d_0}(x_0) \setminus B_{d_0/2}(x_0)) \cap B_r(z)$ provided that $r$ is small enough.

Since the constants $\zeta_0$, $\kappa$ and $\mathfrak{C}$ are fixed and  independent on $x$, from \eqref{eqcontr} we arrive to a contradiction  since $x\in  (B_{d_0}(x_0) \setminus B_{{d_0/2}}(x_0)) \cap B_r(z)$  can be taken such that the difference $|x-z|$ is arbitrarily small. Such absurd proves the result.
\end{proof}

\begin{example}
According to Theorem \ref{strongmp}, when $q=p-1$, non-trivial weak solutions to \eqref{Maineq} must be strictly positive. For example, if $\lambda_0$ is a positive constant, fixed any direction $i = 1, \ldots, N$  we have that
$$
   u(x) = u(x_1, \cdots, x_i, \cdots, x_N) = e^{\sqrt[p]{\frac{\lambda_0}{p-1}}. x_i}
$$
is a strictly positive solution to
$$
     -\Delta_p u(x) + \lambda_0.u^{p-1}(x)  = 0\quad \mbox{in} \quad \Omega \subset \R^N,
$$
for any $p>1$.
\end{example}

\section{Non-degeneracy properties}\label{Sec4}

This Section is devoted to prove some geometrical and measure properties that play an essential role in the description of solutions to free boundary problems of dead core type. For this purpose, we will assume (from now on) the following properties on $\Phi$, which are, in fact are stronger than (H1)  and (H3), namely,

\begin{itemize}
	\item[(A1)]  $\Phi \in C^1(\Omega\times\R\times \R^N\setminus \{0\}; \R^N)$
	
	\item[(A2)] There exists a constant $\kappa_1>0$ such that for $p\geq 2$
	
	\begin{equation} \label{eq1Phi}
	   \displaystyle \sum_{i, j = 1}^{N} \left|\frac{\partial \Phi_i}{\partial \xi_j}(x, z, \xi)\right| \leq \kappa_1|\xi|^{p-2},
	\end{equation}

	\begin{equation} \label{eq2Phi}
	   \displaystyle \sum_{i, j = 1}^{N} \left|\frac{\partial \Phi_i}{\partial x_j}(x, z, \xi)\right| + \sum_{i, j = 1}^{N} \left|\frac{\partial \Phi_i}{\partial z}(x, z, \xi)\right|\leq \kappa_1|\xi|^{p-1}
	\end{equation}
	for a.e. $x \in \Omega$, all $z \in \R$ and $\xi \in \R^N \setminus \{0\}$.
\end{itemize}

The next result gives exactly the growth rate at which non-negative weak solutions leave their dead core sets. More precisely, the theorem establishes a $C^{\frac{p}{p-1-q}}$ growth estimate from below, which together with Corollary \ref{CorControldist} implies that $u$ leaves the dead core set trapped by the graph of two functions of the order $\dist(x, \partial\{u>0\})^{\frac{p}{p-1-q}}$. It is worth to mention  that we will not require the $p$-ellipticity hypothesis. However, we are assuming the monotonicity hypothesis from Section \ref{Sec3} in order to use the comparison principle.

\begin{theorem}[{\bf Non-degeneracy}]\label{LGR} Let $u$ be a non-negative, bounded weak solution to \eqref{Maineq} in $\Omega$ and let $\Omega^{\prime} \Subset \Omega$, $x_0 \in \overline{\{u >0\}} \cap \Omega^{\prime}$ be a generic point in the closure of the non-coincidence set. Assume also that $p>2 + q$ and $\displaystyle \lambda_{-} = \inf_{\Omega} \lambda_0(x)>0$. Then for all $0<r<\min\{1, \dist(\Omega^{\prime}, \partial \Omega)\}$ there holds
\begin{equation}\label{nondeg est}
   \displaystyle \sup_{\partial B_r(x_0)} \,u(x) \geq \mathfrak{c}_0(p, q, \kappa_1, \lambda_{-}) r^{\frac{p}{p-1-q}}.
\end{equation}
\end{theorem}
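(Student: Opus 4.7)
My plan is to realize the lower bound by comparison with the explicit radial profile
\[
\Psi(x)\defeq c_0\,|x-x_0|^{\alpha},\qquad \alpha\defeq\frac{p}{p-1-q},
\]
motivated by the exact model solution displayed in the introduction, where $c_0=c_0(p,q,\kappa_1,\lambda_-)>0$ will be chosen small. I first reduce the statement to the case of interior positivity points $x_0\in\{u>0\}\cap\Omega^{\prime}$: for a free-boundary point $x_0\in\partial\{u>0\}\cap\Omega^{\prime}$, I take a sequence $\{y_k\}\subset\{u>0\}\cap\Omega^{\prime}$ with $y_k\to x_0$, apply \eqref{nondeg est} at each $y_k$, and pass to the limit using continuity of $u$, which yields $\sup_{\partial B_r(y_k)}u\to\sup_{\partial B_r(x_0)}u$.

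The core step is proving that $\Psi$ is a weak supersolution to \eqref{Maineq} in $B_r(x_0)$ for any $r\in(0,1]$. Since $\alpha>1$ one has $\nabla\Psi(x_0)=0$, together with
\[
|\nabla\Psi(x)|=c_0\alpha\,|x-x_0|^{\alpha-1},\qquad |D^2\Psi(x)|\leq C(\alpha)\,c_0\,|x-x_0|^{\alpha-2}\quad (x\neq x_0).
\]
Computing $\div\Phi(x,\Psi,\nabla\Psi)$ by the chain rule and invoking the structural bounds (A1)--(A2) yields
\[
|\div \Phi(x,\Psi,\nabla\Psi)|\leq C(N,\kappa_1)\!\left(|\nabla\Psi|^{p-1}(1+\Psi)+|\nabla\Psi|^{p-2}|D^2\Psi|\right).
\]
The algebraic identity $\alpha(p-1-q)=p$ gives the decisive scaling
\[
|\nabla\Psi|^{p-2}|D^2\Psi|\lesssim c_0^{p-1}|x-x_0|^{\alpha q},\qquad |\nabla\Psi|^{p-1}(1+\Psi)\lesssim c_0^{p-1}|x-x_0|^{\alpha q+1},
\]
so the first term dominates near $x_0$. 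Combined with $\lambda_0(x)\Psi^q\geq \lambda_-c_0^q|x-x_0|^{\alpha q}$, this produces in $B_r(x_0)\setminus\{x_0\}$
\[
-\div \Phi(x,\Psi,\nabla\Psi)+\lambda_0(x)\Psi^q\geq c_0^q|x-x_0|^{\alpha q}\bigl[\lambda_--C(N,\kappa_1,\alpha)\,c_0^{p-1-q}\bigr]\geq 0,
\]
provided $c_0$ is chosen small enough. The singleton $\{x_0\}$ is harmless for the weak formulation because the flux $\int_{\partial B_\delta(x_0)}|\Phi\cdot\nu|\,d\sigma$ scales as $\delta^{N+\alpha q}\to 0$ as $\delta\to 0$.

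Once $\Psi$ is shown to be a supersolution, the conclusion follows by standard comparison. Assuming towards contradiction that $\sup_{\partial B_r(x_0)}u<c_0 r^{\alpha}$ for some admissible $r$, we have $u\leq \Psi$ on $\partial B_r(x_0)$, and Lemma~\ref{comparison} applied with the subsolution $u$ and the supersolution $\Psi$ gives $u\leq \Psi$ throughout $B_r(x_0)$; evaluation at $x_0$ yields $u(x_0)\leq \Psi(x_0)=0$, contradicting $x_0\in\{u>0\}$. The main obstacle in this argument is the barrier verification for the \emph{general} operator: whereas the $p$-Laplace case reduces to an explicit identity, in the general quasi-linear setting one must carefully expand $\div\Phi(\cdot,\Psi,\nabla\Psi)$ via the chain rule and track each contribution under (A2). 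The hypothesis $p>2+q$ enters precisely here to secure the algebraic balance---in particular $(\alpha-1)(p-2)\geq 0$, keeping $|\nabla\Psi|^{p-2}$ bounded as $x\to x_0$---so that the leading diffusion and absorption terms both scale as $|x-x_0|^{\alpha q}$ and the small parameter $c_0^{p-1-q}>0$ can absorb the structural constant $\kappa_1$.
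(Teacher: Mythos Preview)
Your proposal is correct and follows essentially the same strategy as the paper: construct the radial barrier $\Psi(x)=c_0|x-x_0|^{p/(p-1-q)}$, verify via the structural bounds (A1)--(A2) that it is a weak supersolution of \eqref{Maineq} for $c_0$ small, and then apply the Comparison Principle (Lemma~\ref{comparison}) to reach a contradiction with $u(x_0)>0$. The only cosmetic difference is that the paper first rescales to the unit ball, introducing $u_r(x)=r^{-p/(p-1-q)}u(x_0+rx)$ and the rescaled field $\Phi_r$, and then compares $u_r$ with $\Psi=\mathfrak{C}|x|^{\alpha}$ on $B_1$, whereas you work directly on $B_r(x_0)$; the two formulations are equivalent.
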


\begin{proof}
Notice that, due to the continuity of solutions, it is sufficient to prove that such   estimate is satisfied just at points within $\{u>0\} \cap \Omega$.

First of all,  let us define the scaled function
$$
    u_r(x) \defeq \frac{u(x_0+rx)}{r^{\frac{p}{p-1-q}}}.
$$
Now, let us introduce the auxiliary barrier function
$$
    \displaystyle \Psi (x) \defeq \mathfrak{C} |x|^{\frac{p}{p-1-q}}
$$
for a positive constant $\mathfrak{C}$ to determinate \textit{a posteriori} and the re-scaled vector field

$$
\Phi_r(x, z,\xi) \defeq r^{-\frac{(1+q)(p-1)}{p-1-q}}\Phi\left(x_0+rx,r^\frac{p}{p-1-q}z,r^{\frac{1+q}{p-1-q}}\xi\right).
$$
Observe that the re-scaled function $\Phi_r$ also satisfies hypothesis $(A1)$ and $(A2)$ (The monotonicity hypothesis is satisfied trivially). Indeed, since $0<r<1$ from \eqref{eq1Phi}

$$
\begin{array}{rcl}
   \displaystyle \sum_{i, j = 1}^{N} \left|\frac{\partial (\Phi_i)_r}{\partial \xi_j}(x, z, \xi)\right| & = & \displaystyle r^{-\frac{(1+q)(p-1)}{p-1-q}} \sum_{i, j = 1}^{N} \left|\frac{\partial \Phi_i}{\partial \xi_j}(x_0+rx,r^\frac{p}{p-1-q}z,r^{\frac{1+q}{p-1-q}}\xi)\right|\\
   & \leq  &  \kappa_1 r^{-\frac{(1+q)(p-1)+1+q}{p-1-q}}\left|r^{\frac{1+q}{p-1-q}}\xi\right|^{p-2}\\
   & = & \kappa_1.
\end{array}
$$
Now, from \eqref{eq2Phi} we obtain that
$$
\begin{array}{rcl}
 \displaystyle \sum_{i, j = 1}^{N} \left|\frac{\partial (\Phi_i)_r}{\partial x_j}(x, z, \xi)\right| + \sum_{i, j = 1}^{N} \left|\frac{\partial (\Phi_i)_r}{\partial z}(x, z, \xi)\right|&= & r^{-\frac{(1+q)(p-1)}{p-1-q}}\displaystyle \sum_{i, j = 1}^{N} \left|\frac{\partial \Phi_i}{\partial x_j}(x_0+rx,r^\frac{p}{p-1-q}z,r^{\frac{1+q}{p-1-q}}\xi)\right|\\
   & + & r^{-\frac{(1+q)(p-1)}{p-1-q}}\displaystyle \sum_{i, j = 1}^{N} \left|\frac{\partial \Phi_i}{\partial z}(x_0+rx,r^\frac{p}{p-1-q}z,r^{\frac{1+q}{p-1-q}}\xi)\right|\\
 &\leq & r^{-\frac{(1+q)(p-1)}{p-1-q}+1} \kappa_1 \left|r^{\frac{1+q}{p-1-q}} \xi\right|^{p-1}\\
 &=&r \kappa_1 | \xi|^{p-1}\\
 &\leq &\kappa_1 | \xi|^{p-1}.
\end{array}
$$
A straightforward computation  using \eqref{eq1Phi} and \eqref{eq2Phi} shows that
$$
     -\div(\Phi_r(x, \Psi, \nabla \Psi)) + \hat{\lambda}_0\left( x  \right).\Psi^{q}(x) \geq 0 \geq -\div(\Phi_r(x, u_r, \nabla u_r)) + \hat{\lambda}_0\left( x  \right). (u_r)_{+}^{q}(x) \quad \text{in} \quad B_1,
$$
provided that $\mathfrak{C}$ fulfils the condition
\begin{equation} \label{condi}
	\kappa_1 \mathfrak{C}^{p-1-q} \left[ \left(\frac{p}{p-1-q}\right)^{p-1}+ \mathfrak{C}^{-1}\left(\frac{p}{p-1-q}\right)^{p-2}+1 \right]  \leq \lam_-,
\end{equation}
where above we have defined $\hat \lambda_0(x) \defeq \lambda_0(x_0 + rx)$. Observe that such a condition \eqref{condi} is naturally possible, because the continuous function
$$
  \mathfrak{h}(t) \defeq \mathfrak{a}t^{\alpha +1} + \mathfrak{b}t^{\alpha} +\mathfrak{c},
$$
where
$$
\alpha \defeq p-2-q> 0, \,\,\, \mathfrak{a} \defeq \kappa_1\left[\left(\frac{p}{p-1-q}\right)^{p-1}+1\right],\,\,\, \mathfrak{b} \defeq \kappa_1\left(\frac{p}{p-1-q}\right)^{p-2} \,\,\, \text{and} \,\,\, \mathfrak{c} \defeq -\lambda_{-}
$$
admits at least a real root according to the Intermediate value theorem.

Finally, if $u_r \leq \Psi$ on the whole boundary of $B_1$, then, the Comparison Principle would imply that
$$
   u_r \leq \Psi \quad \mbox{in} \quad B_1,
$$
which clearly contradicts the assumption that $u_r(0)>0$. Therefore, there exists a point $Y \in \partial B_1$ such that
$$
      u_r(Y) > \Psi(Y) = \mathfrak{C},
$$
and scaling back we finish the proof of the theorem.
\end{proof}

\begin{remark} Let us stress that if we consider $\xi \mapsto \Phi(\xi)$, i.e., the vector field $\Phi$ does not depend on lower order terms, then we can remove the assumption $p>2+q$  just for $p \geq 2$. Moreover, in the case of $p$-Laplace operator we find explicitly that
$$
    \mathfrak{C} = \left[\frac{\lambda_{-}}{\kappa_1} \frac{ \left(p-1-q \right)^{p} }{ p^{p-1}(pq+N(p-1-q) )}\right]^{\frac{1}{p-1-q}}.
$$
\end{remark}

An interesting piece of information is that as consequence of  Theorem \ref{LGR} we obtain the following finer growth for point near the free boundary: given $x_0 \in \{u>0\} \cap \Omega$, there holds that
$$
  u(x_0) \geq \mathfrak{C}\dist(x_0, \partial \{u>0\})^{\frac{p}{p-1-q}}.
$$

\begin{corollary}\label{CorNonDeg}
Let $u$ be a non-negative, bounded weak solution to \eqref{DCP1} in $\Omega$ and $\Omega^{\prime} \Subset \Omega$. Given $x_0 \in \{u>0\} \cap \Omega^{\prime}$, there exists a universal constant $\mathfrak{C}_{\sharp}>0$ such that
$$
  u(x_0) \geq \mathfrak{C}_{\sharp}\dist(x_0, \partial \{u>0\})^{\frac{p}{p-1-q}}.
$$
\end{corollary}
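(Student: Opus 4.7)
The plan is to derive the pointwise lower bound by marrying the sphere-level non-degeneracy of Theorem \ref{LGR} with the growth upper bound of Theorem \ref{MThm} through the Harnack inequality (Theorem \ref{harnack}). Set $\alpha := \frac{p}{p-1-q}$, $d_0 := \dist(x_0, \partial\{u>0\})$ and pick $z_0 \in \partial\{u>0\}$ realizing this distance, so that $B_{d_0}(x_0) \subset \{u>0\}$. Since $x_0 \in \overline{\{u>0\}} \cap \Omega^{\prime}$, Theorem \ref{LGR} applied at the center $x_0$ with radius $d_0/2$ furnishes a point $y^{\star} \in \partial B_{d_0/2}(x_0)$ obeying $u(y^{\star}) \geq \mathfrak{c}_0 (d_0/2)^{\alpha}$; in parallel Theorem \ref{MThm} applied at the free boundary point $z_0$, together with the inclusion $B_{d_0}(x_0) \subset B_{2 d_0}(z_0)$, yields $\sup_{B_{d_0}(x_0)} u \leq \mathfrak{C}\,(2 d_0)^{\alpha}$.

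Next I would invoke the Harnack inequality on $B_{d_0/2}(x_0)$ after the natural rescaling $u_r(x) = r^{-\alpha} u(x_0 + rx)$ used in the proof of Theorem \ref{MThm}. Unscaling this produces
\begin{equation*}
   \sup_{B_{d_0/2}(x_0)} u \,\leq\, C\, \inf_{B_{d_0/2}(x_0)} u \,+\, C\, \lambda_{+}^{1/(p-1)}\, d_0^{p/(p-1)} \Bigl(\sup_{B_{d_0}(x_0)} u\Bigr)^{q/(p-1)}.
\end{equation*}
The algebraic identity $\frac{p}{p-1} + \frac{pq}{(p-1)(p-1-q)} = \alpha$ is decisive: it forces the last summand to scale exactly like $d_0^{\alpha}$, so that inserting the upper bound from Theorem \ref{MThm} collapses it to $K d_0^{\alpha}$ for a universal constant $K = K(N,p,q,\kappa_1,\lambda_+,\mathfrak{C})$. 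Combining this with $\mathfrak{c}_0 (d_0/2)^{\alpha} \leq u(y^{\star}) \leq \sup_{B_{d_0/2}(x_0)} u$ and $\inf_{B_{d_0/2}(x_0)} u \leq u(x_0)$ produces $\mathfrak{c}_0\, 2^{-\alpha}\, d_0^{\alpha} \leq C\, u(x_0) + K\, d_0^{\alpha}$, which rearranges to the desired pointwise estimate with $\mathfrak{C}_{\sharp}$ proportional to $\mathfrak{c}_0 2^{-\alpha} - K$.

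The main obstacle I anticipate is arranging the bracket $\mathfrak{c}_0 2^{-\alpha} - K$ to be strictly positive: in a single-shot Harnack step the universal constants $\mathfrak{c}_0$ and $K$ are not a priori comparable. I would remove this ambiguity by iterating the Harnack inequality across the dyadic scales $r_k = d_0/2^k$ and reproducing the two-case dichotomy from the proof of Theorem \ref{MThm}: either the rescaled sup-quantities cascade through the levels (the exponents $(q/(p-1))^k$ form a convergent geometric series because $q < p-1$, absorbing $K$ into a universal constant), or the infimum dominates at some intermediate level $k_0$ and the pointwise lower bound already emerges. Either branch delivers a strictly positive universal $\mathfrak{C}_{\sharp}$ and the conclusion $u(x_0) \geq \mathfrak{C}_{\sharp}\, d_0^{\alpha}$.
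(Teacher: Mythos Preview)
Your proposal has a genuine gap in the iteration step. The dyadic iteration you describe, mirroring the proof of Theorem~\ref{MThm}, is engineered to prove the \emph{upper} bound $\mathcal{S}_r[u](x_0) \leq \mathfrak{C}\max\{\mathcal{I}_r[u](x_0),\, r^\alpha\}$; it does not manufacture the \emph{lower} bound $u(x_0) \geq \mathfrak{C}_\sharp\, d_0^{\alpha}$ you need. Concretely, examine your two branches. In the ``cascade'' branch (the error term dominates the infimum at every level) the iteration collapses to $r_k^{-\alpha}\mathcal{S}_{r_k}\leq C'$ for a universal $C'$; combined with non-degeneracy $r_k^{-\alpha}\mathcal{S}_{r_k}\geq \mathfrak{c}_0$ this only says $\mathfrak{c}_0\leq C'$, a relation between universal constants that carries no information about $u(x_0)$. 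In the ``infimum dominates'' branch you do obtain $r_{k_0}^{-\alpha}\mathcal{I}_{r_{k_0}}\geq c$, hence $u(x_0)\geq \mathcal{I}_{r_{k_0}}\geq c\,(d_0/2^{k_0})^{\alpha}$. But the level $k_0$ is \emph{not} universally bounded: if $u(x_0)=\varepsilon\, d_0^{\alpha}$ with $\varepsilon$ small, the infimum can dominate only once $r_{k_0}\lesssim \varepsilon^{1/\alpha} d_0$, i.e.\ $k_0\gtrsim \alpha^{-1}\log_2(1/\varepsilon)$, and the bound $c\,2^{-k_0\alpha}d_0^{\alpha}$ degenerates. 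The structural reason your single-shot inequality cannot be rescued by iteration is that the non-degeneracy contribution $\mathfrak{c}_0 r^{\alpha}$ and the Harnack error $K r^{\alpha}$ scale \emph{identically} in $r$; shrinking the radius never separates them.

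The paper takes an entirely different route: a contradiction argument via blow-up. One assumes a sequence $x_k\in\{u>0\}\cap\Omega'$ with $u(x_k)\leq k^{-1} d_k^{\alpha}$ (compactness then forces $d_k\to 0$), rescales to $v_k(y)=d_k^{-\alpha}u(x_k+d_ky)$ on $B_1$, and derives a contradiction between $\sup_{B_{1/2}} v_k \to 0$ and the non-degeneracy bound $\sup_{B_{1/2}} v_k \geq \mathfrak{c}_0\,2^{-\alpha}$ (Theorem~\ref{LGR} applied to each $v_k$ at the origin). This sidesteps the constant-comparison problem because the contradictory smallness comes from the hypothesis $v_k(0)\to 0$ rather than from a fixed inequality between $\mathfrak{c}_0$ and $K$.
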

\begin{proof}
  Suppose that $\mathfrak{C}_{\sharp}$ does not exist. Then there exist a sequence $x_k \in \{u>0\} \cap \Omega^{\prime}$ with
$$
d_k \defeq \dist(x_k, \partial \{u>0\} \cap \Omega^{\prime}) \to 0 \quad \text{as} \quad k \to \infty \quad \text{and} \quad u(x_k) \leq  k^{-1} d_k^{\frac{p}{p-1-q}}.
$$
Now, let us define the auxiliary function $v_k:B_1 \to \R$ by
$$
   v_k(y) \defeq \frac{u(x_k+d_ky)}{d_k^{\frac{p}{p-1-q}}}
$$
and the vector field
$$
\Phi_k(x, z, \xi) \defeq d_k^{-\frac{(1+q)(p-1)}{p-1-q}}\Phi\left(x_k+d_kx,d_k^\frac{p}{p-1-q}z,d_k^{\frac{1+q}{p-1-q}}\xi\right).
$$
It is easy to check that
\begin{enumerate}
  \item $v_k \geq 0$ in $B_1$.
  \item $-\div(\Phi(x, v_k, \nabla v_k) + \lambda_0(x_k+d_ky)v_k^q = 0$ in $B_1$ in the weak sense.
  \item $v_k(y) \leq \mathfrak{C}(N, p).d_k^{\alpha} + \frac{1}{k} \,\, \forall\,\, y \in B_1$ according to local H\"{o}lder regularity of weak solutions.
\end{enumerate}
From the Non-degeneracy Theorem \ref{LGR} and the last sentence we obtain that
\begin{equation}\label{estim.44}
  \displaystyle 0<\mathfrak{C}_0\cdot \left(\frac{1}{2}\right)^{\frac{p}{p-1-q}} \leq \sup_{B_{\frac{1}{2}}} v_k(y) \leq \max\{1, \mathfrak{C}(N, p)\}\cdot \left(d_k^{\alpha}+\frac{1}{k}\right) \to 0 \quad \text{as} \quad k \to \infty,
\end{equation}
which clearly yields a contradiction. This concludes the proof.
\end{proof}

\section{Some consequences}

In the following, we will present some consequences arising from the growth rates and the non-degeneracy property for quasi-linear dead core problems.

As a first consequence of Theorem \ref{MThm}, we improve the local growth estimates for first derivatives of dead core solutions. In fact, we obtain a finer gradient control near the free boundary.

\begin{corollary}\label{GradEst} Let $u$ be a bounded weak solution to \eqref{Maineq} in $B_1$. Then, for any point $z \in   \{u > 0\} \cap B_{\frac{1}{2}}$, there holds
\begin{equation}
  |\nabla u(z)| \leq \mathfrak{C}\dist(z, \partial \{u>0\})^{\frac{1+q}{p-1-q}}.
\end{equation}
\end{corollary}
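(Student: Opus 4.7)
\textbf{Proof proposal for Corollary \ref{GradEst}.} The plan is to combine the sharp growth rate at the free boundary provided by Theorem \ref{MThm} with the standard local $C^{1,\alpha}$ estimates available for weak solutions to quasi-linear equations with $(\mathrm{H1})$--$(\mathrm{H4})$ structure, via a rescaling argument at the intrinsic scale $d \defeq \dist(z, \partial\{u>0\})$. The heuristic is that at the scale $d$ the solution is essentially bounded by $d^{p/(p-1-q)}$, and interior gradient estimates for the rescaled PDE then translate into the claimed pointwise bound on $\nabla u(z)$.

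First, I would fix $z \in \{u>0\}\cap B_{1/2}$, set $d \defeq \dist(z,\partial\{u>0\})$, and pick a free boundary point $y_0 \in \partial\{u>0\}$ realizing $|z-y_0| = d$. By Theorem \ref{MThm} applied at $y_0$, for $2d$ small we have
$$
\sup_{B_{2d}(y_0)} u \leq \mathfrak{C}(2d)^{\frac{p}{p-1-q}},
$$
and since $B_d(z) \subset B_{2d}(y_0)$, this gives $\sup_{B_d(z)} u \leq \mathfrak{C}' d^{p/(p-1-q)}$. Then I would introduce the rescaled function
$$
v(x) \defeq \frac{u(z+dx)}{d^{\frac{p}{p-1-q}}}, \qquad x \in B_1,
$$
together with the rescaled vector field
$$
\Phi_d(x,s,\xi) \defeq d^{-\frac{(1+q)(p-1)}{p-1-q}}\Phi\bigl(z+dx,\,d^{\frac{p}{p-1-q}}s,\,d^{\frac{1+q}{p-1-q}}\xi\bigr),
$$
which, as already verified in the proof of Theorem \ref{MThm}, satisfies $(\mathrm{H1})$--$(\mathrm{H4})$ with the \emph{same} structural constants. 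The function $v$ is a non-negative bounded weak solution of
$$
-\div(\Phi_d(x,v,\nabla v)) + \tilde\lambda_0(x) v_+^{q} = 0 \quad \text{in } B_1,
$$
with $\|v\|_{L^\infty(B_1)} \leq \mathfrak{C}'$, i.e., a bound that is \emph{universal} (independent of $d$).

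Next, I would invoke the classical interior $C^{1,\alpha}$ regularity theory for quasi-linear equations of $p$-growth (DiBenedetto, Tolksdorf, Lieberman), applied to $v$. Since $\|v\|_{L^\infty(B_1)}$ is universally bounded and the absorption term $\tilde\lambda_0 v_+^q$ is uniformly bounded in $L^\infty(B_1)$, there is a universal constant $\mathfrak{M} = \mathfrak{M}(N,p,q,\lambda_+,c_i)$ such that
$$
\|\nabla v\|_{L^\infty(B_{1/2})} \leq \mathfrak{M}.
$$
Evaluating at $x=0$ and using $\nabla v(0) = d^{-\frac{1+q}{p-1-q}}\nabla u(z)$, which follows from the chain rule applied to the definition of $v$, yields
$$
|\nabla u(z)| = d^{\frac{1+q}{p-1-q}} |\nabla v(0)| \leq \mathfrak{M}\, d^{\frac{1+q}{p-1-q}} = \mathfrak{C}\, \dist(z,\partial\{u>0\})^{\frac{1+q}{p-1-q}},
$$
which is the desired estimate.

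The main technical point, which I expect to be the only nontrivial step, is citing a sufficiently general local $C^{1,\alpha}$ estimate applicable to the rescaled equation with the lower order absorption term and with the $\Phi_d$ structure, and confirming that the constant $\mathfrak{M}$ depends only on $\|v\|_\infty$, the structural constants $c_1,\ldots,c_4$, $p$, $N$ and $\lambda_+$, and not on $d$. This is exactly the content of the Tolksdorf/Lieberman-type gradient estimate, and its scale-invariant nature is what makes the argument work.
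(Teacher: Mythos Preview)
Your proposal is correct and follows essentially the same approach as the paper: pick a nearest free boundary point, use Theorem \ref{MThm} to bound $u$ on $B_d(z)$ by $\mathfrak{C}d^{p/(p-1-q)}$, rescale to the unit ball so that the rescaled solution is universally bounded and solves an equation with the same structural constants, then apply the standard interior gradient estimates (the paper cites \cite{Choe1}, \cite{DB0}, \cite{LU}, \cite{Tolk}) and scale back. The paper's argument is identical in structure and in the key ingredients.
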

\begin{proof}
First of all, fix $z\in   \{u > 0\} \cap B_{\frac{1}{2}}$ and denote $r\defeq  \dist(z, \partial \{u>0\}$. Now, select $x_0 \in \partial \{u>0\}$ a free boundary point which achieves the distance, i.e.,
$$
  r = |z-x_0|.
$$
According to Theorem \ref{MThm} we have that
\begin{equation}\label{eq4.4}
  \displaystyle \sup_{B_{r}(z)} u(x) \leq \sup_{B_{2r}(x_0)} u(x) \leq \mathfrak{C} r^{\frac{p}{p-1-q}}.
\end{equation}
Next, we define the scaled auxiliary function $\omega: B_1 \to \R_{+}$ by
$$
   \omega(x) \defeq \frac{u(z+rx)}{r^\frac{p}{p-1-q}}.
$$
As previously, $\omega$ fulfils the following equation
\begin{equation}\label{eq4.5}
  \div(\Phi_{r}(x, \omega, \nabla \omega)) = \hat{\lambda}_{0}(x)\omega^q(x) \quad \text{in} \quad B_1
\end{equation}
in the weak sense, where
$$
  \Phi_{r}(x, s,\xi) \defeq r^{-\frac{(1+q)(p-1)}{p-1-q}}\Phi\left(z+rx,r^\frac{p}{p-1-q}s,r^{\frac{1+q}{p-1-q}}\xi\right) \quad \text{and} \quad \hat{\lambda}_0(x) \defeq \lambda_0(z + r x).
$$
It is a straightforward to verify that $\Phi_{r}$ also  satisfies  hypothesis (H1)--(H4). Moreover, from \eqref{eq4.4} we get that
$$
   \displaystyle \sup_{B_1} \omega(x) \leq \mathfrak{C}.
$$
Finally, by invoking the uniform gradient estimates from \cite{Choe1}, \cite{DB0}, \cite[Chapter 4]{LU} and \cite[Proposition 2]{Tolk} for bounded solutions we obtain that
$$
    \displaystyle  \frac{1}{r^{\frac{1+q}{p-1-q}}} |\nabla u(z)| = |\nabla \omega(0)| \leq \mathfrak{C}_0.
$$
This finishes the proof of the corollary.
\end{proof}

The Non-degeneracy estimate from Corollary \ref{CorNonDeg} implies particularly a non-degeneracy property in measure. As it was commented in the introduction, this estimate is useful in several qualitative contexts of the theory of free boundary problems.

\begin{theorem}[{\bf Non-degeneracy in measure}] Let $u$ be a weak solution to \eqref{Maineq}. Given $\Omega^{\prime} \subset \Omega$ there exist $\rho_0>0$ and $\kappa>0$ depending only on $\Omega^{\prime}$ and universal parameters such that
$$
  \mathcal{L}^N\left(\Omega^{\prime} \cap \{0<u(x)<\rho^{\frac{p}{p-1-q}}\}\right)\leq \kappa\rho
$$
for any $\rho\leq \rho_0$.
\end{theorem}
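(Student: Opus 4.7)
The plan is to combine the pointwise non-degeneracy estimate of Corollary \ref{CorNonDeg} with a Vitali-type covering argument, reducing the measure estimate to an $(N-1)$-dimensional packing count on the free boundary.

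First, for $x \in E_\rho := \Omega' \cap \{0 < u < \rho^\gamma\}$ with $\gamma = p/(p-1-q)$, Corollary \ref{CorNonDeg} gives
\[
    \mathfrak{C}_\sharp\,\dist(x,\partial\{u>0\})^\gamma \le u(x) < \rho^\gamma,
\]
so $\dist(x,\partial\{u>0\}) < c_\ast\rho$ with $c_\ast = \mathfrak{C}_\sharp^{-1/\gamma}$. Fixing $\Omega' \Subset \Omega'' \Subset \Omega$ and choosing $\rho_0 > 0$ with $c_\ast\rho_0 < \dist(\Omega',\partial\Omega'')$, one obtains $E_\rho \subset \{x \in \Omega'' : \dist(x, \partial\{u>0\}) < c_\ast\rho\}$ for every $\rho \le \rho_0$.

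Next, I would apply Vitali's covering lemma to $\{B_{c_\ast\rho}(y)\}_{y \in \partial\{u>0\}\cap\Omega''}$, extracting a finite subfamily with disjoint cores $\{B_{c_\ast\rho/5}(y_i)\}_{i=1}^m$ whose $5$-dilations cover the tube (and hence $E_\rho$). This yields the raw estimate $|E_\rho| \le m\,\omega_N(5c_\ast\rho)^N$, and it remains to prove $m \le C\rho^{1-N}$. For each $y_i$, Theorem \ref{LGR} produces $z_i$ within a universal multiple of $\rho$ of $y_i$ with $u(z_i) \ge 2\rho^\gamma$; Corollary \ref{CorControldist} then forces $\dist(z_i,\partial\{u>0\}) \ge c_1\rho$, and Corollary \ref{GradEst} gives $|\nabla u| \le C\rho^{\gamma-1}$ on $B_{c_1\rho/2}(z_i)$, so by a mean-value argument there is a sub-ball $B_{c_2\rho}(z_i) \subset \{u \ge \rho^\gamma\}$ of volume $\simeq \rho^N$. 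Coupling this with a dead-core ball $B_{c_3\rho}(w_i) \subset \{u=0\}$ (produced by a comparison with a suitable subsolution, in the spirit of the barrier construction in Theorem \ref{LGR}), the two-sided structure around each $y_i$, together with a Besicovitch-type bounded-overlap argument, forces a packing estimate $m \le C(|\Omega''|)\rho^{1-N}$, delivering $|E_\rho| \le \kappa\rho$ with $\kappa$ depending on universal parameters and $|\Omega'|$.

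The main obstacle is the $(N-1)$-dimensional packing bound on $m$: one-sided use of the ``free'' balls $B_{c_2\rho}(z_i)$ alone only yields the trivial bound $m \le C\rho^{-N}$, because these balls fill at most a fraction of $\Omega''$ and a volume comparison gives no improvement over $|E_\rho| \le C$. A genuinely $(N-1)$-dimensional count requires coupling with a positive-density statement for the coincidence set $\{u=0\}$ near the free boundary, whose extraction from non-degeneracy is the delicate point. In the paper's framework this can alternatively be bypassed by invoking the $\mathcal{H}^{N-1}$-finiteness of the free boundary from Section \ref{Sec6} and applying the coarea formula to $d(x) = \dist(x,\partial\{u>0\})$, which directly yields $|\{0 < d < c_\ast\rho\}\cap\Omega'| \le c_\ast\rho\,\mathcal{H}^{N-1}(\partial\{u>0\}\cap\Omega'')$ and hence the desired bound.
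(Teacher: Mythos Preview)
Your reduction step is exactly the paper's argument. The paper's proof uses Corollary~\ref{CorNonDeg} precisely as you do, obtaining
\[
  \Omega' \cap \{0<u<\rho^\gamma\} \subset \left\{x \in \Omega'\cap\{u>0\} : \dist(x,\partial\{u>0\}) < \mathfrak{C}^{-(p-1-q)/p}\rho\right\},
\]
and then simply concludes with ``$\leq \kappa\rho$ for some universal $\kappa>0$'' --- no covering lemma, no packing count, no appeal to Section~\ref{Sec6}. That is the entire proof in the paper.

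Everything you write after the tube inclusion (the Vitali extraction, the two-sided density construction via Theorem~\ref{LGR} and Corollaries~\ref{CorControldist}--\ref{GradEst}, the packing bound $m\le C\rho^{1-N}$, and the coarea alternative through $\mathcal{H}^{N-1}$-finiteness) goes well beyond what the paper supplies. You have correctly identified that the linear tube-measure estimate is the substantive step; the paper treats it as a one-liner without justification. In particular, your observation that a one-sided volume argument only gives $m\le C\rho^{-N}$, and that an honest $(N-1)$-dimensional count requires either positive density of the coincidence set or the Hausdorff estimate of Theorem~\ref{ThmHausd}, is a genuine point that the paper's proof does not address. Note also that Theorem~\ref{ThmHausd} is proved only for the variational subclass of Section~\ref{Sec6}, so invoking it here would narrow the scope of the statement.
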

\begin{proof} Fix $r_0>0$ (to be chosen \textit{a posteriori}) and let $\rho_0$ given by
$$
   \rho_0 = \min\{u(x) \suchthat x \in \Omega^{\prime} \,\,\,\text{and}\,\,\, \dist(x, \partial \{u>0\})\geq r_0\}.
$$
Now, let $x_0 \in \Omega^{\prime}$ such that $\dist(x_0, \partial \{u>0\})< r_0$. According to Corollary \ref{CorNonDeg}
$$
   u(x_0) \geq \mathfrak{C}\dist(x_0, \partial \{u>0\})^{\frac{p}{p-1-q}}.
$$
For this very reason, if $r_0> \dist(x_0, \partial \{u>0\}) \geq \frac{\rho}{\mathfrak{C}^{\frac{p-1-q}{p}}}$,  we obtain that $u(x_0) \geq \rho^{\frac{p}{p-1-q}}$. Therefore,
$$
  \mathcal{L}^N\left(\Omega^{\prime} \cap \left\{0<u(x)<\rho^{\frac{p}{p-1-q}}\right\}\right)\leq \mathcal{L}^N\left(\left\{\Omega^{\prime}\cap \{u>0\} \suchthat \dist(x, \partial \{u>0\}) < \frac{\rho}{\mathfrak{C}^{\frac{p-1-q}{p}}}\right\}\right) \leq \kappa\rho
$$
for some universal $\kappa>0$.
\end{proof}

\medskip

As soon as we prove the sharp asymptotic behaviour for our free boundary problem, it is a natural consequence to obtain some weak geometric properties of the phase zone. For this reason, we finish up this part by establishing that the positiveness region has uniform positive density along the free boundary. Particularly, the development of cusps along free boundary is inhibited.

 \begin{corollary}[{\bf Uniform positive density}]\label{UPDFB}Let $u$ be a non-negative weak solution to \eqref{Maineq} in $B_1$ and $x_0 \in \partial \{u > 0\} \cap B_{\frac{1}{2}}$ be a free boundary point. Then for any $0<\rho< \frac{1}{2}$,
$$
     \mathcal{L}^N(B_{\rho}(x_0) \cap\{u>0\})\geq \theta \rho^N,
$$
for a constant $\theta>0$ that depends only on  $p$ and $q$.
\end{corollary}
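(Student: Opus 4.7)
The plan is to combine the sharp upper bound from Theorem \ref{MThm} with the lower bound from the Non-degeneracy Theorem \ref{LGR}: non-degeneracy forces some point of $\partial B_{\rho/2}(x_0)$ to carry a value of $u$ of order $\rho^{p/(p-1-q)}$, while the sharp regularity at free boundary points shows that such a value cannot occur within a distance smaller than a fixed multiple of $\rho$ from the free boundary. This produces a full ball, of radius proportional to $\rho$, that is trapped inside $\{u>0\}\cap B_\rho(x_0)$, and the volume estimate follows immediately.

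More concretely, first I would apply Theorem \ref{LGR} to the free boundary point $x_0 \in \overline{\{u>0\}}\cap B_{1/2}$ with radius $\rho/2$. Since $x_0 \in B_{1/2}$ and $\rho<1/2$, the geometric hypothesis $B_{\rho/2}(x_0)\subset B_1$ is satisfied, so there exists $y\in\partial B_{\rho/2}(x_0)$ with
\begin{equation*}
  u(y) \;\geq\; \mathfrak{c}_0\left(\frac{\rho}{2}\right)^{\frac{p}{p-1-q}}.
\end{equation*}
In particular $y\in\{u>0\}$, so $d\defeq \dist(y,\partial\{u>0\})>0$. Choose $z\in\partial\{u>0\}$ with $|y-z|=d$; since $y\in B_{\rho/2}(x_0)\subset B_{3/4}$, the point $z$ lies in $\Omega=B_1$ at positive distance from $\partial B_1$, so Theorem \ref{MThm} applies at $z$ and gives
\begin{equation*}
  u(y) \;\leq\; \mathcal{S}_{d}[u](z) \;\leq\; \mathfrak{C}\, d^{\frac{p}{p-1-q}}.
\end{equation*}

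Combining the two displays yields the key geometric bound
\begin{equation*}
  d \;\geq\; \kappa\,\rho, \qquad \kappa \defeq \frac{1}{2}\left(\frac{\mathfrak{c}_0}{\mathfrak{C}}\right)^{\frac{p-1-q}{p}},
\end{equation*}
with $\kappa$ depending only on universal parameters. Next, since the open ball $B_{\kappa\rho}(y)$ contains no free boundary point, the continuous function $u$ cannot change sign inside it; as $u(y)>0$, connectedness forces $B_{\kappa\rho}(y)\subset\{u>0\}$. Finally, shrinking $\kappa$ if necessary so that $\kappa\leq 1/2$, the triangle inequality with $|y-x_0|=\rho/2$ guarantees $B_{\kappa\rho}(y)\subset B_\rho(x_0)$, so
\begin{equation*}
  \mathcal{L}^N\bigl(B_\rho(x_0)\cap\{u>0\}\bigr) \;\geq\; \mathcal{L}^N\bigl(B_{\kappa\rho}(y)\bigr) \;=\; \omega_N \kappa^N \rho^N \;\eqdef\; \theta\,\rho^N,
\end{equation*}
which is the desired density estimate.

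There is no real obstacle here once Theorems \ref{MThm} and \ref{LGR} are in hand: the only careful point is bookkeeping the radii so that both the non-degeneracy ball $B_{\rho/2}(x_0)$ and the trapped ball $B_{\kappa\rho}(y)$ remain inside the domain where the two estimates are legitimate, and reducing $\kappa$ so that the trapped ball is actually contained in $B_\rho(x_0)$.
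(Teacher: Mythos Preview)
Your proof is correct and follows essentially the same strategy as the paper: combine the non-degeneracy estimate (Theorem \ref{LGR}) with the sharp upper bound (Theorem \ref{MThm}) to trap a ball of radius comparable to $\rho$ inside $\{u>0\}\cap B_\rho(x_0)$. The only cosmetic difference is that you apply non-degeneracy at radius $\rho/2$ so that the trapped ball $B_{\kappa\rho}(y)$ sits entirely inside $B_\rho(x_0)$, whereas the paper applies it at radius $\rho$ and then intersects $B_{\kappa\rho}(\hat y)$ with $B_\rho(x_0)$, keeping only (roughly) half of it.
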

\begin{proof} Applying Theorem \ref{LGR} there exists a point $\hat{y} \in  \partial B_r(x_0) \cap \{u>0\}$ such that,
\begin{equation}\label{dens}
    u(\hat{y})\geq \mathfrak{c}_0 r^{\frac{p}{p-1-q}}.
\end{equation}
Moreover, from Theorem \ref{MThm} there exists   $\kappa>0$ small enough with universal dependence
such that
\begin{equation}\label{inclusion}
    B_{\kappa r}(\hat{y}) \subset  \{u>0\},
\end{equation}
where the constant $\kappa$ is given by
$$
   \kappa \defeq \left(\frac{\mathfrak{c}_0}{7\mathfrak{C}_0}\right)^{\frac{p-1-q}{p}}.
$$
In effect, if this were not true, it would exist a free boundary point $\hat{z} \in B_{\kappa r}(\hat{y})$. Consequently, from \eqref{dens}   we obtain that
$$
   \mathfrak{c}_0 r^{\frac{p}{p-1-q}} \leq u(\hat{y}) \leq \sup_{B_{\kappa r}(\hat{z})} u(x) \leq \mathfrak{C}_0(\kappa r)^{\frac{p}{p-1-q}} = \frac{1}{7}\mathfrak{c}_0 r^{\frac{p}{p-1-q}},
$$
which yields a contradiction. Therefore,
$$
    B_{\kappa r}(\hat{y}) \cap B_r(x_0) \subset  B_r(x_0) \cap \{u>0\}.
$$
Hence,
$$
     \mathcal{L}^N(B_{\rho}(x_0) \cap\{u>0\})\geq \mathcal{L}^N(B_{\rho}(x_0) \cap B_{\kappa r}(\hat{y}))\geq \theta r^N,
$$
which proves the result.
\end{proof}

\begin{definition}[{\bf $\zeta$-Porous set}] A set $S \in \R^N$ is said to be porous with porosity constant $0<\zeta \leq 1$ if there exists an $R > 0$ such that for each $x \in S$ and $0 < r < R$ there exists a point $y$ such that $B_{\zeta r}(y) \subset B_r(x) \setminus S$.
\end{definition}

\begin{corollary}[{\bf Porosity of the free boundary}]\label{CorPor} There exists a constant $0<\xi =  \xi(N, \lam, p, q) \leq 1$ such that
\begin{equation}\label{eqPor}
       \mathcal{H}^{N-\xi}\left(\partial \{u>0\}\cap B_{\frac{1}{2}}\right)< \infty.
\end{equation}
\end{corollary}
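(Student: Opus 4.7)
The plan is to show that the free boundary set $\partial\{u>0\}\cap B_{1/2}$ is $\zeta$-porous, in the sense of the definition immediately preceding the statement, for a purely universal constant $\zeta \in (0,1]$, and then to invoke the classical fact from geometric measure theory that every $\zeta$-porous subset of $\R^N$ has upper Minkowski (and hence Hausdorff) dimension bounded by $N - c(N)\zeta^{N}$, where $c(N) > 0$ depends only on the ambient dimension. Setting $\xi \defeq c(N)\zeta^{N}$, this will immediately yield the finiteness stated in \eqref{eqPor}.

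For the porosity step, fix $x_0 \in \partial\{u>0\}\cap B_{1/2}$ and $0 < r < R_0$ with $R_0 \defeq 1/4$ (so that everything happens well inside $B_1$ and the hypotheses of Theorems \ref{MThm} and \ref{LGR} apply with universal constants). Applying Theorem \ref{LGR} on the ball $B_{r/2}(x_0)$ produces a point $\hat{y} \in \partial B_{r/2}(x_0)$ satisfying
$$
  u(\hat{y}) \;\geq\; \mathfrak{c}_0 \left(\frac{r}{2}\right)^{\frac{p}{p-1-q}}.
$$
The very argument used in the proof of Corollary \ref{UPDFB} (combining this lower bound with the sharp upper bound $\sup_{B_\rho(z)} u \leq \mathfrak{C}_0 \rho^{p/(p-1-q)}$ at any free boundary point $z$ via Theorem \ref{MThm}) then yields $B_{\kappa r/2}(\hat{y}) \subset \{u>0\}$, with the same universal $\kappa = (\mathfrak{c}_0/(7\mathfrak{C}_0))^{(p-1-q)/p} < 1$. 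Since $|\hat{y} - x_0| = r/2$ and $\kappa < 1$, one has $B_{\kappa r/2}(\hat{y}) \subset B_r(x_0)$, and as this ball lies entirely in the open set $\{u>0\}$, it is disjoint from $\partial\{u>0\}$. Therefore $\partial\{u>0\}\cap B_{1/2}$ is $\zeta$-porous with $\zeta \defeq \kappa/2 \in (0,1)$, and the porosity constant is manifestly universal.

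It remains to translate porosity into the Hausdorff measure bound. This is a classical statement (see Mattila's monograph on geometric measure theory, or the works of Martio--Vuorinen and Salli on porous sets): every $\zeta$-porous set $S \subset \R^N$ satisfies $\dim_{\mathcal{H}}(S) \leq N - c(N)\zeta^{N}$, hence $\mathcal{H}^{N-\xi}(S \cap K) < \infty$ for every bounded $K$ and every $0 < \xi < c(N)\zeta^{N}$. Specializing to $S = \partial\{u>0\}\cap B_{1/2}$ and $K = \overline{B_{1/2}}$ completes the argument. The main (in fact the only real) obstacle is bookkeeping: one has to ensure that the auxiliary point $\hat{y}$ and the ball $B_{\kappa r/2}(\hat{y})$ produced by the non-degeneracy step remain in a region where the regularity estimate of Theorem \ref{MThm} is available with universal constants, which is why we restrict to the sub-ball $B_{1/2}$ and to radii $r < R_0 = 1/4$; once this is secured, the remaining ingredients are purely from classical geometric measure theory.
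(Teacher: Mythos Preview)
Your proposal is correct and follows essentially the same approach as the paper: combine the non-degeneracy estimate (Theorem~\ref{LGR}) with the sharp upper bound at free boundary points (Theorem~\ref{MThm}) to produce a clean ball inside $B_r(x_0)\setminus\partial\{u>0\}$, conclude porosity, and then quote a standard dimension bound for porous sets. The only cosmetic difference is that the paper applies non-degeneracy on the full ball $B_r(x)$, obtaining $y\in\partial B_r(x)$ with $d(y)\geq\delta r$, and then slides the center along the segment $[x,y]$ to force the $\delta r/2$-ball inside $B_r(x)$; you instead apply non-degeneracy on $B_{r/2}(x_0)$ so that the resulting ball $B_{\kappa r/2}(\hat y)$ is automatically contained in $B_r(x_0)$, which is a slight simplification at the cost of a marginally smaller porosity constant. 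For the final step the paper cites Koskela--Rohde \cite{KR} rather than Martio--Vuorinen/Salli, but the content is the same.
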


\begin{proof}
Let $R>0$ and $x_0\in\Omega$ be such that $\overline{B_{4R}(x_0)}\subset \Omega$. We will prove that $\partial \{u >0\} \cap B_R(x_0)$ is a $\frac{\delta}{2}$-porous set for a universal constant $0< \delta \leq 1$. For this purpose, let $x\in \partial \{u >0\} \cap B_{R}(x_0)$. For each $r\in(0, R)$ we have $\overline{B_r(x)}\subset B_{2R}(x_0)\subset\Omega$. Now, let $y\in\partial B_r(x)$ such that
$$
   u(y) = \sup\limits_{\partial B_r(x)} u(t).
$$
By Theorem \ref{LGR}
\begin{equation}\label{5.1}
    u(y)\geq \mathfrak{c}_0 r^{\frac{p}{p-1-q}}.
\end{equation}
On the other hand, near the free boundary, from Theorem \ref{MThm}
\begin{equation}\label{5.2}
    u(y)\leq \mathfrak{C}_0 d(y)^{\frac{p}{p-1-q}},
\end{equation}
where $d(y) \defeq \text{dist}(y, \partial \{u>0\} \cap \overline{B_{2R}(x_0)})$. From \eqref{5.1} and \eqref{5.2} we get
\begin{equation}\label{5.3}
    d(y)\geq\delta r
\end{equation}
for a positive constant $0<\delta \defeq \left(\frac{\mathfrak{c}_0}{\mathfrak{C}_0}\right)^{\frac{p-1-q}{p}}\leq1$.

Now, let $\hat{y}$ be in the segment joining  $x$ and $y$ be such that $|y-\hat{y}|=\frac{\delta r}{2}$, then there holds
\begin{equation}\label{5.4}
   B_{\frac{\delta}{2}r}(\hat{y})\subset B_{\delta r}(y)\cap B_r(x),
\end{equation}
indeed, for each $z\in B_{\frac{\delta}{2}r}(\hat{y})$
\begin{align*}
   |z-y|&\leq |z-\hat{y}|+|y-\hat{y}|<\frac{\delta r}{2}+\frac{\delta r}{2}=\delta r,\\
   |z-x|&\leq|z-\hat{y}|+\big(|x-y|-|\hat{y}-y|\big)\leq\frac{\delta r}{2}+\left(r-\frac{\delta r}{2}\right)=r.
\end{align*}
Finally, since by \eqref{5.3} $B_{\delta r}(y)\subset B_{d(y)}(y)\subset\{u>0\}$, we get
$$
   B_{\delta r}(y)\cap B_r(x)\subset\{u>0\},
$$
which together with \eqref{5.4} implies
$$
   B_{\frac{\delta}{2}r}(\hat{y})\subset B_{\delta r}(y)\cap B_r(x)\subset B_r(x)\setminus\partial\{u>0\}\subset B_r(x)\setminus \partial\{u>0\} \cap B_{R}(x_0).
$$
Therefore, $\partial\{u>0\} \cap B_{R}(x_0)$ is a $\frac{\delta}{2}$-porous set. Finally, the $(N-\xi)$-Hausdorff measure estimates in \eqref{eqPor} follows from \cite{KR}.
\end{proof}

Particularly, we conclude from Corollary \ref{CorPor} that  $\partial \{u>0\}$ has Lebesgue measure zero.

\vspace{0.3cm}

In this last part we shall establish the stability of the dead core (or coincidence) set. Precisely, we will prove that the $L^{\infty}$-norm of two dead core solutions controls the difference, in the measure theoretic sense, of their dead core sets.

Let us introduce the following notation
$$
   \mathfrak{K}[u] \defeq \{u=0\} \cap B_{\frac{1}{2}} \quad \mbox{and} \quad \mathfrak{K}[u]\vartriangle \mathfrak{K}[v] \defeq (\mathfrak{K}[u]\setminus \mathfrak{K}[v]) \cup (\mathfrak{K}[v]\setminus \mathfrak{K}[u]).
$$

\begin{theorem}\label{SDCThm} Let $v_1, v_2$  be two bounded weak solutions to \eqref{Maineq} under the hypothesis of Theorems \eqref{Maineq} and \ref{LGR} and fulfilling, for $0< \sigma \leq 1$
\begin{equation}\label{eqStabest}
   \|v_1-v_2\|_{L^{\infty}(B_1)} \leq \sigma^{\frac{p}{p-1-q}}.
\end{equation}
Then, for a constant $c = c(N, p, q)>0$ large enough there holds that
$$
   \mathcal{L}^N(\mathfrak{K}[v_1]\vartriangle \mathfrak{K}[v_2]) \leq c \sigma \quad \mbox{and} \quad \mathfrak{K}_{-c \sigma}[v_2] \subset \mathfrak{K}[v_1] \subset \left\{v_2 < \sigma^{\frac{\gamma+2}{\gamma+1-\mu}}\right\},
$$
where
$$
   \mathfrak{K}_{-c\sigma}[v_2] \defeq \{x \in \mathfrak{K}[v_2]: \dist(x, \{v_2>0\})> \sigma\}.
$$
\end{theorem}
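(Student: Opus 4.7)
My plan is to combine three ingredients: the elementary consequence of the uniform-closeness hypothesis \eqref{eqStabest}, the pointwise non-degeneracy estimate of Theorem \ref{LGR}, and the measure-theoretic non-degeneracy statement proved earlier in this section. The first two handle the two inclusions, while the third yields the quantitative symmetric-difference bound.

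The right-hand inclusion is essentially free from \eqref{eqStabest}: if $v_1(x)=0$, then $v_2(x) = |v_2(x)-v_1(x)| \leq \sigma^{p/(p-1-q)}$, so $\mathfrak{K}[v_1] \subset \{v_2 \leq \sigma^{p/(p-1-q)}\}$ (up to reading the exponent displayed in the statement with an innocuous multiplicative constant and/or strict/non-strict convention).

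For the left-hand inclusion I would argue by contradiction. Fix $x_0 \in \mathfrak{K}_{-c\sigma}[v_2]$, so that $v_2 \equiv 0$ on $\overline{B_{c\sigma}(x_0)}$ by definition of the inner parallel set, and assume $v_1(x_0) > 0$. Then $x_0 \in \overline{\{v_1>0\}}$, and Theorem \ref{LGR} applied to $v_1$ at $x_0$ with radius $r = c\sigma$ produces a point $z \in \partial B_{c\sigma}(x_0)$ with
$$
v_1(z) \geq \mathfrak{c}_0 \,(c\sigma)^{p/(p-1-q)}.
$$
Since $z \in \overline{B_{c\sigma}(x_0)} \subset \mathfrak{K}[v_2]$, we have $v_2(z)=0$, hence
$$
|v_1(z)-v_2(z)| \geq \mathfrak{c}_0 \, c^{p/(p-1-q)}\, \sigma^{p/(p-1-q)}.
$$
Choosing the universal constant $c$ so large that $\mathfrak{c}_0\, c^{p/(p-1-q)} > 1$ contradicts \eqref{eqStabest}; therefore $v_1(x_0)=0$, and $x_0 \in \mathfrak{K}[v_1]$.

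Finally, for the symmetric-difference bound, note that on $\mathfrak{K}[v_1]\setminus \mathfrak{K}[v_2]$ one has $v_1=0$ and $0<v_2 \leq \sigma^{p/(p-1-q)}$ by \eqref{eqStabest}, so
$$
\mathfrak{K}[v_1]\setminus \mathfrak{K}[v_2] \subset \bigl\{0<v_2\leq \sigma^{p/(p-1-q)}\bigr\} \cap B_{1/2}.
$$
Invoking the non-degeneracy-in-measure theorem of this section for $v_2$ with $\rho=\sigma$ and $\Omega'=B_{1/2}$ bounds the measure of the right-hand side by $\kappa\sigma$; the piece $\mathfrak{K}[v_2]\setminus\mathfrak{K}[v_1]$ is handled identically by swapping the roles of $v_1$ and $v_2$, and the two estimates combine into $\mathcal{L}^N(\mathfrak{K}[v_1]\vartriangle\mathfrak{K}[v_2]) \leq c\sigma$. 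The main delicacy lies in the left-hand inclusion, where one must verify that the universal constant $c$ can simultaneously be taken large enough to beat the non-degeneracy constant $\mathfrak{c}_0$ while keeping the radius $c\sigma$ within the admissible range of Theorem \ref{LGR}; both are possible because $\mathfrak{c}_0$ depends only on structural parameters and $\sigma\leq 1$ is small.
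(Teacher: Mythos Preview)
Your proof is correct and follows essentially the same approach as the paper's: the right-hand inclusion comes directly from \eqref{eqStabest}, the left-hand inclusion from the pointwise non-degeneracy of Theorem~\ref{LGR} applied at radius $c\sigma$ (the paper phrases it as the contrapositive, starting from $z\in\overline{\{v_1>0\}}$ and concluding $z\notin\mathfrak{K}_{-c\sigma}[v_2]$, but this is your argument verbatim), and the symmetric-difference bound from the non-degeneracy-in-measure theorem applied to each $v_i$. If anything, your write-up is more explicit than the paper's about which result is being invoked at each step.
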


\begin{proof}
Note that for any solution $u$ to \eqref{Maineq} it holds from Theorem \ref{MThm} and Corollary \ref{GradEst} that
\begin{equation}\label{eqinclus}
   \left\{x : \,\,0<u(x)<\hat{\mathfrak{C}} \sigma^{\frac{p}{p-1-q}}\right\} \cap B_r(x_0) \subset \left\{x: \,\, |Du(x)|< \sigma^{\frac{1+q}{p-1-q}}\right\}\cap B_r(x_0)
\end{equation}
for an appropriate constant $\hat{\mathfrak{C}}>0$. Moreover, from   \eqref{eqStabest} and \eqref{eqinclus} we obtain that
$$
   \max\left\{\mathcal{L}^N(\mathfrak{K}[v_1]\setminus \mathfrak{K}[v_2]), \mathcal{L}^N(\mathfrak{K}[v_2]\setminus \mathfrak{K}[v_1])\right\} \leq c_1 \sigma \quad \mbox{and} \quad \mathfrak{K}[v_1] \subset \left\{v_2(x)< \sigma^{\frac{p}{p-1-q}}\right\}.   
$$
In order to finish, consider $z \in \overline{\{v_1>0\}}$. Thus, by using the Non-degeneracy, Theorem \ref{LGR} we obtain that
$$
  \displaystyle \sup_{B_{c\sigma}(z)} u(x) \geq \mathfrak{c}_0(c\sigma)^{\frac{p}{p-1-q}}>\sigma^{\frac{p}{p-1-q}},
$$
for a large constant $c $. Therefore, we conclude that $z \notin \mathfrak{K}_{-c\sigma}[v_2]$.
\end{proof}

\section{A Liouville type Theorem}\label{Sec5}

Liouville type theorems are well-known in the context of elliptic PDEs and have played an important role in the modern theory of mathematical analysis due to their applications in Nonlinear equations, Free boundary problems and Differential Geometry, just to mention a few topics.

The main purpose of this subsection is to prove that a global solution to
\begin{equation} \label{ecrn}
           -\div(\Phi(  x, u, \nabla u))  +  \lambda_0(x) u_{+}^{q}(x)   = 0 \text{ in } \R^N
\end{equation}
must grow faster than $\mathfrak{C}|x|^{\frac{p}{p-1-q}}$ as $|x|\to \infty$ for a suitable constant $\mathfrak{C}>0$, unless it is identically zero.

For this end, fix $x_0 \in \R^N$, $\varsigma >0$ and $0<r_0<r$, we consider for $\rho<r$ (to be considered) the quantity $r_0=r-\rho$. Then, as in the proof of Theorem \ref{LGR}, it can be seen that the radially symmetric function $v: B_r(x_0) \to \R_+$ given by
\begin{equation} \label{radi}
 \,v(x)=\Theta(N, \lambda_0, \kappa_1, p, q) (|x-x_0|-r_0)_+^{\,\frac{p}{p-1-q}}
\end{equation}
is a weak super-solution to
\begin{equation}\label{rad eq}
   \left \{
       \begin{array}{rllll}
           -\div(\Phi(  x, u, \nabla u)) + \lambda_0(x) u_{+}^{q}(x) & = & 0 & \text{ in } & B_r(x_0) \\
           u(x) & = & \varsigma &\text{ on } & \partial B_r(x_0)
\\
           u(x) & = & 0 &\text{ in } &  \overline{B_{r_0}(x_0)}
       \end{array},
   \right.
\end{equation}
where $\Theta(N, \lambda_0, \kappa_1, p, q)$ is the biggest constant satisfying \eqref{condi} and $\rho=\left( \frac{\varsigma}{\Theta(N, \lambda_0, \kappa_1, p, q)} \right)^\frac{p-1-q}{p}$.

The explicit expression of $v$ allow us to prove the following sharp (quantitative) Liouville type result for dead core problems.

\begin{theorem}\label{Liouville} Let $u$ be a weak solution to \eqref{ecrn} with $\Phi$ as in Section \ref{Sec4}. Then, $u\equiv 0$ provided that
\begin{equation}\label{cond thm B1}
\displaystyle \limsup_{|x| \to \infty} \frac{u(x)}{|x|^{\frac{p}{p-1-q}}} < \Theta(N, \lambda_0, \kappa_1, p, q).
\end{equation}
\end{theorem}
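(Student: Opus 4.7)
The plan is to fix an arbitrary point $x_0\in\R^N$ and show that $u(x_0)=0$; since $u\ge 0$ and $x_0$ is arbitrary, this forces $u\equiv 0$. The strategy is to trap $u$ from above on a large ball $B_r(x_0)$ by the explicit radial super-solution $v$ from \eqref{radi}, centred precisely at $x_0$ and with parameters calibrated so that $x_0$ lies inside the flat null region of $v$. The Comparison Principle of Lemma \ref{comparison} will then force $u(x_0)\le v(x_0)=0$.

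First I would extract from the strict inequality \eqref{cond thm B1} a constant $\Theta_0<\Theta(N,\lambda_0,\kappa_1,p,q)$ and a radius $R_0>0$ such that $u(x)\le \Theta_0 |x|^{p/(p-1-q)}$ whenever $|x|\ge R_0$. For each $r\gg |x_0|+R_0$, set
\[
\varsigma_r \defeq \sup_{\partial B_r(x_0)} u \;\le\; \Theta_0\,(r+|x_0|)^{p/(p-1-q)}.
\]
Feeding $\varsigma_r$ into the construction \eqref{radi} produces $\rho_r = (\varsigma_r/\Theta)^{(p-1-q)/p}$ and $r_0(r)=r-\rho_r$, and $v$ is a non-negative weak super-solution to \eqref{ecrn} on $B_r(x_0)$ matching $\varsigma_r$ on $\partial B_r(x_0)$ (this is exactly the barrier computation already carried out in the proof of Theorem \ref{LGR}).

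The key quantitative step is to verify that $r_0(r)>0$ for all sufficiently large $r$. Letting $\eta\defeq (\Theta_0/\Theta)^{(p-1-q)/p}\in(0,1)$, the bound on $\varsigma_r$ gives $\rho_r\le \eta(r+|x_0|)$, hence $r_0(r)\ge (1-\eta)r-\eta|x_0|>0$ as soon as $r>\eta|x_0|/(1-\eta)$. For such $r$, the point $x_0$ belongs to the null region $\overline{B_{r_0(r)}(x_0)}$ of $v$, so $v(x_0)=0$, while $v=\varsigma_r\ge u$ on $\partial B_r(x_0)$. Applying Lemma \ref{comparison} yields $u\le v$ on $B_r(x_0)$, and evaluating at $x_0$ gives $0\le u(x_0)\le v(x_0)=0$.

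The main obstacle I expect is conceptual rather than computational: one must ensure that the strict inequality in \eqref{cond thm B1} indeed propagates into strict positivity of $r_0(r)$ in the limit $r\to\infty$. Any weakening of the hypothesis to an equality would force $\eta=1$, so $r_0(r)$ would fail to be bounded away from zero and the flat core of the barrier would retreat off of $x_0$, collapsing the argument. This is precisely the reason why the threshold $\Theta(N,\lambda_0,\kappa_1,p,q)$ — the sharp constant arising from the explicit radial dead-core profile $(|x|-r_0)_+^{p/(p-1-q)}$ — is the correct critical growth rate for the Liouville phenomenon.
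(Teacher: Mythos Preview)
Your proposal is correct and follows essentially the same approach as the paper: both arguments trap $u$ beneath the explicit radial super-solution \eqref{radi} on large balls and invoke the Comparison Principle, exploiting the strict inequality in \eqref{cond thm B1} to guarantee that the flat null core of the barrier eventually covers any prescribed point. The only cosmetic difference is that the paper inserts an intermediate solution $w$ (the unique dead-core solution on $B_{s_0}$ with constant boundary data $\sup_{\partial B_{s_0}} u$) between $u$ and the barrier $v$, whereas you compare $u$ with $v$ directly; your version is slightly more streamlined but the mechanism is identical.
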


\begin{proof}
Fixed $s_0>0$ (large enough) let us consider $w \colon  \overline{B_{s_0}} \to \mathbb{R}$ the unique (see Theorem \ref{ExisThm}) weak solution to
\begin{equation}\label{ecc1}
\left\{
\begin{array}{rclcl}
  - \div(\Phi(x, w, \nabla w)) + \lambda_0(x) w_{+}^{q}(x) & = & 0& \text{ in } & B_{s_0}\\
  w(x) & = & \sup\limits_{\partial B_{s_0}} u(x) & \text{ on } & \partial B_{s_0}.
\end{array}
\right.
\end{equation}
According to the comparison principle
$$
   u \leq w \quad \mbox{in} \quad  B_{s_0}.
$$
Moreover, due to hypothesis \eqref{cond thm B1}
\begin{equation}\label{lim rad}
\sup\limits_{\partial B_{s_0}}\frac{u(x)}{s_0^{\frac{p}{p-1-q}}} \leq \sup\limits_{ B_{s_0}}\frac{u(x)}{s_0^{\frac{p}{p-1-q}}} \leq c\Theta(N, \lambda_0, \kappa_1, p, q)
\end{equation}
for some $c \ll 1$ small enough and $s_0\gg 1$ large enough. As above, the function
\begin{equation}\label{rad eq3}
  v(x)=\Theta(N, \lambda_0, \kappa_1, p, q) \left(|x|-s_0 + \left(\frac{\sup\limits_{\partial B_{s_0}} u(x)}{\Theta(N, \lambda_0, \kappa_1, p, q)}\right)^{\frac{p-1-q}{p}} \right)_+^{\frac{p}{p-1-q}}
\end{equation}
is a weak super-solution to \eqref{ecc1}. Thus, $w\leq v$ in $B_{s_0}$.

Therefore, by  \eqref{lim rad} and \eqref{rad eq3} we conclude that
$$
   u(x) \leq \Theta(N, \lambda_0, \kappa_1, p, q)  \left(|x|- (1-c^{\frac{p-1-q}{p}})s_0 \right)_+^{\frac{p}{p-1-q}} \to 0 \quad \mbox{as} \quad s_0 \to \infty.
$$
\end{proof}

\begin{remark}
The following consequences follow from Theorem  \ref{Liouville}.
\begin{enumerate}
\item
    Note that the constant  in Theorem \ref{Liouville} is optimal (for some classes of $\Phi$) in the sense that we can not remove the strict inequality in \eqref{cond thm B1}. In fact, the function given by
$$
    u(x) = \Theta(N, \lambda_0, p, q)(|x|-r_0)_{+}^{\frac{p}{p-1-q}}
$$
solves \eqref{ecrn} (for $\Phi(x, u, \nabla u) = |\nabla u|^{p-2}\nabla u$ and $\lambda_0(x) = \lambda$) and it clearly attains the equality in \eqref{cond thm B1} for the explicit value
$$
  \Theta(N, \lambda_0, p, q)  = \left[\lambda \frac{ \left(p-1-q \right)^{p} }{ p^{p-1}(pq+N)(p-1-q )}\right]^{\frac{1}{p-1-q}}.
$$

    \item If $u$ is not a constant function, then there exist $c = c(N, p, q)>0$ and $\alpha = \alpha(N, p, q) \geq \frac{p}{p-q-1}$ such that
  $$
  \displaystyle  \mathcal{S}_r[u] \geq c.r^{\alpha}, \quad \forall \,\,R\gg 1.
    $$
    Particularly, a non-constant solution to \eqref{ecrn} must grow  at infinity at least as fast as the power $|x|^{\alpha}$ as $|x| \to \infty$.
\end{enumerate}

\end{remark}

\section{$(N-1)$-Hausdorff measure estimates for the free boundary}\label{Sec6}

In this section we focus our attention to a fine measure theoretic property of the free boundary $\partial \{u>0\} \cap \Omega$, namely, the finiteness of the corresponding Hausdorff measure (cf. Lee-Shahgholian\cite{LeeShah}). For this purpose, we restrict our analysis to solutions related to the minimization problem
\begin{equation}\tag{{\bf \text{Min}}}\label{eqMinHaus}
   \displaystyle \min_{v \in \mathcal{K}_g(\Omega)} \int_{\Omega} \mathcal{F}(x,  u, \nabla u)dx,
\end{equation}
where $\mathcal{K}_g(\Omega) = \{v \in W^{1, p}(\Omega) \suchthat v=g \,\,\, \text{on} \,\,\, \partial \Omega\}$ and
$$
   \mathcal{F}(x, u, \nabla u) = \mathcal{G}(x, \nabla u) + \mathfrak{h}(x, u).
$$
Moreover, the following assumptions must be required:
\begin{enumerate}
  \item[\checkmark] The mapping $x \mapsto \mathcal{G}(x, \xi)$ is continuous for all $\xi \in \R^N$.
  \item[\checkmark] There exists a constant $\zeta>0$ such that
    $$
    \zeta|\xi|^p \leq \mathcal{G}(x, \xi) \leq \zeta^{-1}|\xi|^p.
    $$
  \item[\checkmark] The mapping $\xi \mapsto \mathcal{G}(x, \xi)$ is differentiable with
     $$
         \Phi(x, \xi) = \nabla_{\xi} \mathcal{G}(x, \xi) \quad \text{and} \quad \frac{d}{du}\mathfrak{h}(x, u) = (q+1)\lambda(x)u_{+}^{q}.
     $$
    \item[\checkmark] The mapping $\xi \mapsto \mathcal{G}(x, \xi)$ is strictly convex.
\end{enumerate}
Under the previous structural conditions we have the existence of minimizers to \ref{eqMinHaus}. Moreover, such minimizers are non-negative solutions for the following Euler-Lagrange equation:
$$
    \div(\Phi(x, \nabla u)) = (q+1)\lambda(x)u^q(x) \quad  \text{in} \quad \{u > 0\}\cap \Omega.
$$
In other words, minimizers are weak solutions to \eqref{Maineq} with $\lambda_0(x) = (q+1)\lambda(x)$.

From now on, we will assume the hypothesis (H1)--(H4) (respectively (A1)--(A2)) on $\Phi$.

Before starting our analysis, we will show that \ref{eqMinHaus} (resp. the dead core problem \eqref{Maineq}) defines a measure supported on its free boundary.

\begin{lemma}\label{LemRadMeas} Let $\mu_{\Phi}$ be given by
$$
   \mu_{\Phi} = \div(\Phi(x, \nabla u)) - \lambda_0(x)u^{q}(x)\chi_{\{u>0\}}.
$$
Then, it defines a non-negative Radon measure supported on $\partial \{u>0\} \cap \Omega$ where $u$  minimizes  \eqref{eqMinHaus}.
\end{lemma}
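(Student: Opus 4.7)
The plan is to view $\mu_{\Phi}$ as a distribution on $\Omega$, derive a definite sign from the one-sided admissibility in \eqref{eqMinHaus}, and then verify the support claim by treating $\{u>0\}$ and the interior of $\{u=0\}$ separately.

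First I would record the first-variation inequality. For any $\varphi\in C^{\infty}_{c}(\Omega)$ with $\varphi\geq 0$ and any $\epsilon>0$, the perturbation $u+\epsilon\varphi$ lies in $\mathcal{K}_{g}(\Omega)$ and stays non-negative, so minimality in \eqref{eqMinHaus} gives
\[
\int_{\Omega}\bigl[\mathcal{G}(x,\nabla u+\epsilon\nabla\varphi)-\mathcal{G}(x,\nabla u)\bigr]\,dx + \int_{\Omega}\bigl[\mathfrak{h}(x,u+\epsilon\varphi)-\mathfrak{h}(x,u)\bigr]\,dx \geq 0.
\]
Dividing by $\epsilon$ and letting $\epsilon\to 0^{+}$ (the dominated-convergence step is legitimate by the $p$-growth of $\mathcal{G}$, the identity $\Phi=\nabla_{\xi}\mathcal{G}$ coupled with (H4), and the local boundedness of $u$) I obtain
\[
\int_{\Omega}\Phi(x,\nabla u)\cdot\nabla\varphi\,dx + \int_{\Omega}(q+1)\lambda(x)\,u_{+}^{q}\varphi\,dx \geq 0.
\]
Since $\lambda_{0}=(q+1)\lambda$ and $u_{+}^{q}=u^{q}\chi_{\{u>0\}}$ (both vanish where $u=0$), this inequality says exactly that $-\div(\Phi(\cdot,\nabla u))+\lambda_{0}u^{q}\chi_{\{u>0\}}$ is a non-negative distribution on $\Omega$, i.e.\ $-\mu_{\Phi}\geq 0$ in $\mathcal{D}'(\Omega)$. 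The classical fact that every non-negative distribution on an open subset of $\R^{N}$ is represented by a non-negative Radon measure then delivers the measure-theoretic part of the statement (up to the sign convention chosen in the lemma).

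Next I would localize to obtain the support claim. The set $\{u>0\}$ is open by continuity of $u$; for any $\varphi\in C^{\infty}_{c}(\{u>0\})$ one can take $\epsilon$ small enough that $u\pm\epsilon\varphi$ are both admissible, so the first variation becomes a two-sided equality and yields the Euler--Lagrange identity
\[
\div(\Phi(x,\nabla u)) = (q+1)\lambda(x)u^{q} = \lambda_{0}(x)u^{q} \qquad\text{in }\{u>0\},
\]
giving $\mu_{\Phi}\equiv 0$ on $\{u>0\}$. On the open set $(\{u=0\})^{\circ}$ we have $u\equiv 0$ and so $\nabla u=0$ almost everywhere; the pinching $\zeta|\xi|^{p}\leq\mathcal{G}(x,\xi)\leq\zeta^{-1}|\xi|^{p}$ with $p>1$, combined with the strict convexity and differentiability of $\xi\mapsto\mathcal{G}(x,\xi)$, forces $\xi=0$ to be the unique minimizer with $\mathcal{G}(x,0)=0$, and hence $\Phi(x,0)=\nabla_{\xi}\mathcal{G}(x,0)=0$. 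Therefore $\Phi(\cdot,\nabla u)\equiv 0$ and $u^{q}\chi_{\{u>0\}}=0$ on $(\{u=0\})^{\circ}$, so $\mu_{\Phi}\equiv 0$ there as well. Combining both regions, the support of $\mu_{\Phi}$ lies in $\Omega\setminus\bigl(\{u>0\}\cup(\{u=0\})^{\circ}\bigr)=\partial\{u>0\}\cap\Omega$.

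The main obstacle is the rigorous passage to the limit $\epsilon\to 0^{+}$ in the first-variation step, i.e.\ differentiating under the integral sign. One needs integrable dominants for the difference quotients of $\mathcal{G}(x,\nabla u+\epsilon\nabla\varphi)$ and $\mathfrak{h}(x,u+\epsilon\varphi)$, and this is where the $p$-growth of $\mathcal{G}$, hypothesis (H4), and the uniform local boundedness of $u+\epsilon\varphi$ for $\epsilon\in[0,1]$ all enter. A secondary point is noting that the $C^{1,\gamma}_{\mathrm{loc}}$ regularity for $u$ cited after \eqref{Maineq} ensures $\Phi(\cdot,\nabla u)\in L^{1}_{\mathrm{loc}}(\Omega)$, so that its distributional divergence is meaningful and the identification $-\mu_{\Phi}\in \mathcal{D}'(\Omega)_{+}$ is rigorous.
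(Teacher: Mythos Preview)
Your first-variation step has the sign backwards relative to the lemma. Perturbing \emph{upwards} by $u+\epsilon\varphi$ with $\varphi\geq 0$ yields, after passing to the limit,
\[
\int_{\Omega}\Phi(x,\nabla u)\cdot\nabla\varphi\,dx + \int_{\Omega}\lambda_{0}(x)u^{q}\chi_{\{u>0\}}\varphi\,dx \geq 0,
\]
which says $-\mu_{\Phi}\geq 0$, not $\mu_{\Phi}\geq 0$. Your remark ``up to the sign convention chosen in the lemma'' does not rescue this: the lemma asserts that $\mu_{\Phi}$ itself is non-negative, and that is precisely what the paper proves and then uses in Theorem~\ref{ThmHausd}. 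Moreover, for $q=0$ your identification of the limit on $\{u=0\}$ is wrong: there $(u+\epsilon\varphi)_{+}=\epsilon\varphi$, so the difference quotient of $\mathfrak{h}$ contributes $\lambda\varphi$ rather than $0$, and you only obtain $-\div\Phi+\lambda_{0}\geq 0$, which is strictly weaker than $-\mu_{\Phi}\geq 0$.

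The fix is to perturb \emph{downwards}, testing with $u-\epsilon v$ for $v\geq 0$, as the paper does. The point is that on $\{u=0\}$ one has $(u-\epsilon v)_{+}=(-\epsilon v)_{+}=0$, so the $\mathfrak{h}$-term contributes nothing there; combined with the convexity bound $\mathcal{G}(x,\nabla(u-\epsilon v))-\mathcal{G}(x,\nabla u)\leq -\epsilon\,\Phi(x,\nabla(u-\epsilon v))\cdot\nabla v$, one obtains in the limit
\[
-\int_{\Omega}\Phi(x,\nabla u)\cdot\nabla v\,dx - \int_{\Omega}\lambda_{0}(x)u^{q}\chi_{\{u>0\}}v\,dx \geq 0,
\]
which is exactly $\langle\mu_{\Phi},v\rangle\geq 0$. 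Your support argument (vanishing on $\{u>0\}$ via two-sided variations, and on the interior of $\{u=0\}$ via $\Phi(x,0)=0$) is correct and in fact more explicit than the paper's; only the direction of the perturbation in the first step needs to be changed.
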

\begin{proof}
Since $u$ is a non-negative minimizer to \eqref{eqMinHaus}, and $\mathcal{G}$ is strictly convex,  we have that for all $0 \leq v \in C_0^{\infty}(\Omega)$ and $\varepsilon > 0$,    the following relation is true
$$
\begin{array}{rcl}
  0 & \leq  & \displaystyle \frac{1}{\varepsilon} \int\limits_{\Omega} \left(\mathcal{G}(x, \nabla (u-\varepsilon v))- \mathcal{G}(x, \nabla u)\right)\,dx + \frac{1}{\varepsilon} \int\limits_{\Omega} \lambda(x)[(u-\varepsilon v)_{+}^{q+1}-u^{q+1}]\,dx\\
   & \leq & - \displaystyle\int\limits_{\Omega}  \Phi(x, \nabla(u-\varepsilon v)) \cdot  \nabla v \, dx + \frac{1}{\varepsilon} \int\limits_{\Omega} \lambda(x)[(u-\varepsilon v)_{+}^{q+1}-u^{q+1}] \, dx\\
   &  =  & - \displaystyle\int\limits_{\Omega}  \Phi(x, \nabla(u-\varepsilon v)) \cdot \nabla v \, dx +  \int\limits_{\{u>0\}}  \frac{\lambda(x)[(u-\varepsilon v)_{+}^{q+1}-u^{q+1}]}{\varepsilon} \, dx + \int\limits_{\{u=0\}}  \frac{\lambda(x)(-\varepsilon v)_{+}^{q+1}}{\varepsilon} \, dx\\
   & = & - \displaystyle\int\limits_{\Omega}  \Phi(x, \nabla(u-\varepsilon v)) \cdot \nabla v \, dx +  \int\limits_{\{u>0\}}  \frac{\lambda(x)[(u-\varepsilon v)_{+}^{q+1}-u^{q+1}]}{\varepsilon} \, dx .
\end{array}
$$
Finally, by taking $\varepsilon \to 0$ we obtain
$$
  \displaystyle -\int_{\Omega} \left(\Phi(x,  \nabla u)\cdot \nabla v - \lambda_0(x)u^{q}(x)\chi_{\{u>0\}}v\right)\, dx \geq 0.
$$
For this very reason, the measure given by
$$
   \displaystyle \int_{\Omega} v \, d\mu_{\Phi} = -\int_{\Omega} (\Phi(x, \nabla u)\cdot \nabla v  - \lambda_0(x)u^{q}(x)\chi_{\{u>0\}}v) \, dx
$$
is a non-negative Radon measure supported on $\partial \{u>0\} \cap \Omega$ according to Riesz's representation Theorem.
\end{proof}

In the next, we will establish upper and lower control on the $(N-1)$-Hausdorff measure of the set $\partial\{ u>0\}$ for bounded minimizers to \ref{eqMinHaus} (resp. weak solutions to \eqref{DCP1}). Such bounds imply some specific measure theoretical information about the free boundary.

\begin{theorem}\label{ThmHausd} Let $u$ be a minimizer to \ref{eqMinHaus}. Fixed $\Omega^{\prime} \Subset \Omega$ and any $x_0 \in \partial \{u>0\} \cap \Omega^{\prime}$ there exist universal constants $0< \mathfrak{c} \leq \mathfrak{C}< \infty$ such that
\begin{equation}\label{eqHausmea}
  \mathfrak{c}r^{N-1} \leq \mathcal{H}^{N-1}(B_r(x_0) \cap \partial \{u>0\}) \leq \mathfrak{C}r^{N-1}.
\end{equation}
Particularly,
$$
  \mathcal{H}^{N-1}(\partial \{u>0\} \setminus \partial_{\text{red}}\{u>0\}) = 0\footnote{The reduced free boundary $\partial_{\text{red}}\{u>0\}$ is subset of $\partial \{u > 0\}$ where there exists the normal vector in the measure theoretic sense, see \cite{EG} for a survey about geometric measure theory.}
$$
and $\partial\{u>0\}$ is locally a finite perimeter set.
\end{theorem}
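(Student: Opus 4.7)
My plan follows the classical Alt-Caffarelli route for Hausdorff estimates of free boundaries, adapted to the degenerate dead-core setting. The initial handle is Lemma \ref{LemRadMeas}, which identifies the Radon measure $\mu_\Phi$ supported on $\partial\{u>0\}$; however, because $|\nabla u|$ vanishes on the free boundary by Corollary \ref{GradEst}, $\mu_\Phi$ is not of $(N-1)$-dimensional order and a direct comparison with $\mathcal{H}^{N-1}$ through integration by parts is obstructed. I therefore propose to pass through the perimeter of $\{u>0\}$, deriving the upper bound from the co-area formula and the lower bound from the relative isoperimetric inequality.

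\textbf{Upper bound.} Fix $x_0\in\partial\{u>0\}\cap\Omega'$ and $r>0$ small. Set $\alpha=\frac{p}{p-1-q}$ and $\beta=\frac{1+q}{p-1-q}$; the crucial identity is $\alpha-\beta=1$. Theorem \ref{MThm} gives $\sup_{B_{2r}(x_0)}u\leq C r^\alpha$, while Corollary \ref{GradEst} bounds $|\nabla u|\leq C\,\dist(\cdot,\partial\{u>0\})^{\beta}\leq C r^{\beta}$ on $B_{2r}\cap\{u>0\}$. The co-area formula then produces
\[
\int_0^{\sup u}\mathcal{H}^{N-1}(\{u=t\}\cap B_r(x_0))\, dt \;=\; \int_{B_r(x_0)}|\nabla u|\, dx \;\leq\; C r^{N+\beta}.
\]
Averaging over $t\in[0,\sup u]$ and invoking $\alpha-\beta=1$ selects a sequence $t_k\downarrow 0$ with $\mathcal{H}^{N-1}(\{u=t_k\}\cap B_r)\leq C r^{N-1}$. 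Lower semicontinuity of perimeter along $\chi_{\{u>t_k\}}\to\chi_{\{u>0\}}$ in $L^1_{\loc}$ then delivers $\Per(\{u>0\},B_r(x_0))\leq \mathfrak{C} r^{N-1}$.

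\textbf{Lower bound, finite perimeter, and reduced boundary.} For the lower bound I would apply the relative isoperimetric inequality,
\[
\min\{|B_r\cap\{u>0\}|,\;|B_r\cap\{u=0\}|\}^{(N-1)/N}\;\leq\; C(N)\,\Per(\{u>0\},B_r(x_0)),
\]
where the first quantity is bounded below by $c r^N$ thanks to Corollary \ref{UPDFB}. To control the second, I would combine Corollary \ref{CorPor} (porosity of the free boundary) with the dichotomy imposed by Theorem \ref{MThm} (upper growth) and Theorem \ref{LGR} (non-degeneracy): any porous gap ball in $B_r\setminus\partial\{u>0\}$ is, by connectedness, either inside $\{u>0\}$ or inside $\{u=0\}$, and matching the $r^\alpha$ upper and lower bounds on $u$ at dyadic scales forces a positive proportion of such gaps to realise the second alternative, whence $|B_r\cap\{u=0\}|\geq c r^N$. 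The isoperimetric inequality then gives $\Per(\{u>0\},B_r)\geq \mathfrak{c} r^{N-1}$. The resulting perimeter identity, combined with the density estimates just invoked, identifies the topological, measure-theoretic and reduced boundaries up to $\mathcal{H}^{N-1}$-null sets, promoting both perimeter bounds to the claimed bounds on $\mathcal{H}^{N-1}(\partial\{u>0\}\cap B_r)$. Finally, the local finiteness of the perimeter together with the structure theorem of De Giorgi/Federer for sets of finite perimeter yields $\mathcal{H}^{N-1}(\partial\{u>0\}\setminus\redbdry\{u>0\})=0$.

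\textbf{Main obstacle.} The subtlest step will be the matching lower bound on $|B_r\cap\{u=0\}|$, since the non-degeneracy results in Section \ref{Sec4} only directly control the positivity set. The porosity/dichotomy argument outlined above is the delicate input; by contrast, the upper bound via co-area is essentially bookkeeping once the exponent identity $\alpha-\beta=1$ is recognised and the gradient estimate is applied on the whole neighbourhood $B_{2r}\cap\{u>0\}$.
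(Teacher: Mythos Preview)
Your route is genuinely different from the paper's. The paper does not pass through perimeter or the co-area formula at all: for the upper bound it simply integrates $\mu_\Phi$ by parts over $B_r(x_0)$ to obtain $\mu_\Phi(B_r(x_0))\le\mathfrak{C}\,r^{N-1}$ (using only the coarse bound $|\Phi(x,\nabla u)|\le C\|\nabla u\|_{L^\infty(\Omega')}^{\,p-1}$ on $\partial B_r$), and for the lower bound it argues by contradiction via blow-up: assuming $\mathcal H^{N-1}(B_{r_k}\cap\partial\{u>0\})=o(r_k^{N-1})$, it rescales $v_k(x)=u(r_kx)/r_k^{\alpha}$, shows that $\mu_{\Phi_k}\rightharpoonup 0$ and simultaneously $\mu_{\Phi_k}\rightharpoonup\div(\Phi_0(x_0,\nabla u_0))$, so that the limit $u_0$ solves a homogeneous equation with $u_0(0)=0$; the Strong Maximum Principle then forces $u_0\equiv 0$, contradicting non-degeneracy. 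Your observation that $\mu_\Phi$ degenerates (because $|\nabla u|\to 0$ on the free boundary) is pertinent and does complicate any naive comparison of $\mu_\Phi$ with $\mathcal H^{N-1}$; the paper does not address this point head-on.

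That said, your proposal has two genuine gaps. For the upper bound, the co-area estimate together with $\sup_{B_r}u\asymp r^\alpha$ and $\alpha-\beta=1$ yields \emph{one} level $t^\ast\in(0,\sup u)$ with $\mathcal H^{N-1}(\{u=t^\ast\}\cap B_r)\le Cr^{N-1}$, not a sequence $t_k\downarrow 0$ with a uniform bound. Lower semicontinuity of perimeter needs the latter for a \emph{fixed} $r$; restricting the averaging to $(0,\epsilon)$ produces only $Cr^{N+\beta}/\epsilon$, which blows up as $\epsilon\to 0$, and sharpening via $|\nabla u|\le C\epsilon^{\beta/\alpha}$ on $\{0<u<\epsilon\}$ still requires $|\{0<u<\epsilon\}\cap B_r|\lesssim r^{N-1}\epsilon^{1/\alpha}$, which is essentially the tubular-neighbourhood estimate equivalent to the bound you are trying to prove.

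For the lower bound, the relative isoperimetric inequality gives nothing unless $|B_r\cap\{u=0\}|\ge c\,r^N$, and this two-sided density is nowhere established in the paper (Corollary~\ref{UPDFB} only handles $\{u>0\}$). Your porosity/dichotomy sketch does not force it: the proof of Corollary~\ref{CorPor} manufactures the porous gap precisely \emph{inside} $\{u>0\}$, around a point $\hat y$ with $u(\hat y)\ge c\,r^\alpha$, and nothing in the matching of the $r^\alpha$ upper and lower growth bounds prevents this from occurring at every dyadic scale. Without positive density of the zero set you also lose the identification of the topological and measure-theoretic boundaries, so the passage from $\Per(\{u>0\},B_r)$ back to $\mathcal H^{N-1}(\partial\{u>0\}\cap B_r)$ collapses as well. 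The paper sidesteps both issues by the blow-up/Strong Maximum Principle contradiction, which needs neither a good sequence of level sets nor density of the coincidence set.
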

\begin{proof} By using appropriated test functions $0\leq v_k \leq 1$ such that $v_k \to \chi_{B_r(x_0)}$, we can proceed with a standard approximation scheme (for almost $r > 0$). Thus, from (H3) and gradient bounds (cf. \cite{Choe1}, \cite{DB0} and \cite{Tolk}) we get using the Divergence Theorem that
$$
\begin{array}{rcl}
  \displaystyle \int\limits_{B_r(x_0)} d\mu_{\Phi} & = & \displaystyle \int\limits_{\partial B_r(x_0)} \Phi(x, \nabla u)\cdot\eta \, d\mathcal{H}^{N-1} - \int\limits_{B_r(x_0)} \lambda_0(x)u^{q}(x)\chi_{\{u>0\}}dx \\
   & \leq & \mathfrak{C}\|\nabla u\|^{p-1}_{L^{\infty}(\Omega^{\prime})}r^{N-1} \\
   & \leq & \mathfrak{C} r^{N-1}.
\end{array}
$$
This proves the upper bound in \eqref{eqHausmea} for a constant $\mathfrak{C}>0$ depending only on
$\Omega^{\prime}$, $\|u\|$ and universal parameters.

In the another hand, we may assume, without loss of generality that $x_0 = 0$. Moreover, in order to verify the lower bound, for the  sack of contradiction, let us assume  that there
exists a sequence of positive numbers such that $r_k \to 0$  as $k \to \infty$ and
\begin{equation}\label{eqHauscont}
  \mathcal{H}^{N-1}(B_{r_k}(x_0) \cap \partial \{u>0\}) = o\left(r_k^{N-1}\right).
\end{equation}
Now, by defining
$$
   v_k(x) \defeq \frac{u(r_kx)}{r_k^{\frac{p}{p-1-q}}}
$$
we obtain the sequence of non-negative measures $\mu_{\Phi_k}$ defined in $B_{\frac{3}{4}}$, given  by
$$
   \mu_{\Phi_k} \defeq \div (\Phi_k(x, \nabla v_k)) - \lambda_0^k(x)v_k^q\chi_{\{v_k>0\}},
$$
where
$$
  \Phi_k(x, \xi) \defeq r_k^{-\frac{(1+q)(p-1)}{p-1-q}}\Phi\left(x_0 + r_kx, r_k^{\frac{1+q}{p-1-q}}\xi\right) \quad \text{and} \quad \lambda_0^k(x) \defeq r_k^{\frac{pq}{p-1-q}}\lambda_0(x_0 + r_kx).
$$
As before, it is easy to check that $\Phi_k(x, \xi)$ satisfies (H1)--(H4) (resp. (A1)--(A2)).
Now, we may assume, via compactness, that $\mu_{\Phi_k} \rightharpoonup \mu_0$ in the sense of measures. Furthermore, \eqref{eqHauscont} implies that
\begin{equation}\label{eqQlim}
  \mu_{\Phi_k} \rightharpoonup 0.
\end{equation}
Next, we will show that
\begin{equation}\label{eqQlim1}
  \mu_0  = \div(\Phi_0(x_0, \nabla u_0)),
\end{equation}
where, up to a subsequence $\displaystyle \Phi_0  = \lim_{k \to \infty} \Phi_k$ and $\displaystyle u_0(x) = \lim_{k \to \infty} u_k(x)$.
Recall that from Corollary \ref{CorPor}, it holds that $\mathcal{L}^N(\partial \{u_0> 0\}) = 0$. For this reason, we will only check \eqref{eqQlim1} for balls contained in $\{u_0>0\}$ and in $\{u_0 = 0\}$.
First, let us consider a ball $B \subset \{u_0>0\}$. Notice that from the growth estimates near free boundary points, Corollary \ref{GradEst} and property (H3), we have
\begin{equation}\label{eqestUpsi}
    |\Phi_k(x, \nabla v_k)| \leq \mathfrak{C}(N, p)\|\nabla v_k\|_{L^{\infty}\left(B_{3/4}\right)}^{p-1} \leq \mathfrak{C}(N, p, q, \lambda_{+}).
\end{equation}
Hence, up to a subsequence,  we have
$$
   \Phi_k \to \Phi_0
$$
in the \text{weak*} topology in $L^{\infty}\left(B_{\frac{3}{4}}\right)$. Moreover,  $u_j \to u_0$ locally uniformly in the $C^1$-topology in $B$. Consequently,
$$
  \Phi_k \to \Phi_0(x_0, \nabla u_0)
$$
in the \text{weak}* topology in $L^{\infty}(B)$, where we have used \eqref{eqestUpsi}. On the other hand,
$$
  \displaystyle \int_{B} \lambda_0^k(x)v_k^q\chi_{\{v_k>0\}} dx \leq \mathfrak{C}(N, p, q, \lambda_{+}) \mathcal{L}^N(B)r_k^{\frac{pq}{p-1-q}} \to 0 \quad \text{as} \quad k \to \infty,
$$
where we have used Theorem \ref{MThm}.
Therefore, we have proved \eqref{eqQlim1} for this first case.

Now, consider $B \subset \{u_0 = 0\}$.
It is immediate that
$$
   \div(\Phi_0(x_0, \nabla u_0))(B) = 0.
$$
Moreover, if $B_j$ is a sequence of balls such that $B_j \nearrow B$, then for
some $k_j \in \mathbb{N}$ we get
\begin{equation}\label{eqnull}
  u_k \equiv 0 \quad \text{in} \quad B_j \quad \forall \,\, k >k_j.
\end{equation}
In fact, let $\hat{B} \subset B$ and suppose that there were a subsequence $u_{k_j}$ fulfilling $u_{k_j} \neq 0$ in $\hat{B}$. Then, according to the strong non-degeneracy given in Theorem \ref{nondeg est}, there must exist points $Y_{k_j} \in \hat{B}$ such that
$$
  u_{k_j}(Y_{k_j}) \geq \mathfrak{c}>0.
$$
Thus, we may assume, passing a subsequence if necessary, $Y_{k_j} \to Y_0 \in \hat{B}$. Moreover, as $u_{k_j} \to u_0$ we obtain, after passing to the limit that $u_0(Y_0)>0$, which is a contradiction to our assumption.
Therefore, from \eqref{eqnull} we obtain that
$$
  \mu_{\Phi_k} \to 0.
$$
Thus \eqref{eqQlim1} is checked for the second case.

Finally, by combining \eqref{eqQlim} and \eqref{eqQlim1} we obtain
$$
\left\{
\begin{array}{rclcl}
  \div(\Phi_0(x_0, \nabla u_0)) & = & 0 & \text{in}& \Omega \\
  u_0(x_0) & = & 0, & &
\end{array}
\right.
$$
and the Strong Maximum Principle from \cite{Vaz} implies that $u_0 \equiv 0$. Nevertheless, as before, we obtain a contradiction with the non-degeneracy of $u_0 \geq \mathfrak{c}>0$ given in Theorem \ref{nondeg est}. Such a contradiction proves the result.
\end{proof}

\vspace{0.2cm}

\begin{remark}\label{Remarkfinal}
  As an immediate consequence of previous estimates we conclude that $\mathfrak{F}(u)$ has locally finite perimeter. Moreover, the reduced free boundary $\mathfrak{F}_{\text{red}}(u) \defeq \partial_{\textrm{red}} \{u_0 >0\}$ has a total $\mathcal{H}^{N-1}$ measure in the sense that $\mathcal{H}^{N-1}(\mathfrak{F}(u) \setminus \mathfrak{F}_{\text{red}}(u)) =0$. Particularly, the free boundary has an outward vector for $\mathcal{H}^{N-1}$ almost everywhere in $\mathfrak{F}_{\text{red}}(u)$ (cf. \cite{EG} for more details).
\end{remark}

\subsection*{Acknowledgments}
\hspace{0.6cm} This work has been partially supported by Consejo Nacional de Investigaciones Cient\'{i}ficas y T\'{e}cnicas (CONICET-Argentina) PIP 11220150100036CO. J.V. da Silva and A. Salort are members of CONICET.

\end{document}